\newtheorem{theorem}{Theorem}
\newtheorem{lemma}{Lemma}
\newtheorem{claim}{Claim}
\title{{\bf  Some Results on Critical ($P_5,H$)-free Graphs}}
\author[a,b]{Wen Xia}
\author[c]{Jorik Jooken}
\author[c,d]{Jan Goedgebeur}
\author[,a,b]{Shenwei Huang\thanks{Email: shenweihuang@nankai.edu.cn.}}
\affil[a]{College of Computer Science, Nankai University, Tianjin 300071, China}
\affil[b]{Tianjin Key Laboratory of Network and Data Security Technology, Nankai University, Tianjin 300071, China}
\affil[c]{Department of Computer Science, KU Leuven Campus Kulak-Kortrijk, 8500 Kortrijk, Belgium}
\affil[d]{Department of Applied Mathematics, Computer Science and Statistics, Ghent University, 9000 Ghent, Belgium}
\date{}
\begin{document}
	
	\maketitle
\begin{abstract}

 Given two graphs $H_1$ and $H_2$, a graph is $(H_1,H_2)$-free if it contains
no induced subgraph isomorphic to $H_1$ nor $H_2$. A graph $G$ is $k$-vertex-critical if  every
 proper induced subgraph of $G$ has chromatic number less than $k$, but $G$ has chromatic number $k$.
 The study of $k$-vertex-critical graphs for specific graph classes is an important topic in algorithmic graph theory
 because if the number of such graphs that are in a given hereditary graph class is finite, then there exists
 a polynomial-time certifying algorithm to decide
the $k$-colorability of a graph in the class.

In this paper, we show that:
(1) for $k \ge 1$, there are finitely many $k$-vertex-critical
$(P_5,K_{1,4}+P_1)$-free graphs;
(2) for $s \ge 1$,  there are finitely
many 5-vertex-critical $(P_5,K_{1,s}+P_1)$-free graphs;
(3) for $k \ge 1$, there are finitely many $k$-vertex-critical $(P_5,\overline{K_3+2P_1})$-free graphs.
Moreover, we characterize all $5$-vertex-critical
$(P_5,H)$-free graphs where $H \in \{K_{1,3}+P_1,K_{1,4}+P_1,\overline{K_3+2P_1}\}$
using an exhaustive graph generation algorithm.

{\bf Keywords.} Graph coloring; $k$-vertex-critical graphs; Strong perfect graph theorem; Polynomial-time certifying algorithms.
	
\end{abstract}	
\section{Introduction}

All graphs in this paper are finite, undirected and simple. Let $K_n$ be the complete graph on $n$ vertices. Let $P_t$ and $C_t$ denote the path and the cycle on $t$ vertices, respectively. The complement of $G$ is denoted by $\overline{G}$. For two graphs
$G$ and $H$, we use $G+H$ to denote the disjoint union of $G$ and $H$. For a positive integer $r$, we use $rG$
to denote the disjoint union of $r$ copies of $G$. For $s,r \geq 1$, let $K_{r,s}$ be the complete bipartite
graph with one part of size $r$ and the other part of size $s$. We use $\text{d}(u, v)$ to denote the \textit{distance} between vertices $u$
and $v$, i.e., the number of edges of a shortest path between vertices $u$ and $v$.

A \textit{$k$-coloring}
of a graph $G$ is an assignment of $k$ different colors to the vertices of
$G$ such that adjacent vertices receive different colors. The minimum $k$ for
which $G$ has a $k$-coloring is called the \textit{chromatic number} of $G$ and is
denoted by $\chi(G)$. A graph $G$ is \textit{$k$-chromatic} if $\chi(G)= k$.
A graph $G$ is \textit{$k$-critical} if it is $k$-chromatic and $\chi(G-e) < \chi(G)$ for
any edge $e \in E(G)$.
A graph is \textit{critical} if it is $k$-critical  for some
integer $k \ge 1$. For instance, the class of 3-critical graphs is the family
of all chordless odd cycles. Vertex-criticality is a weaker notion. A graph $G$ is \textit{$k$-vertex-critical}
if $\chi(G)=k$ and $\chi(G-v) < k$ for any $v \in V(G)$.

For a set $\mathcal{H}$ of graphs, a graph $G$ is \textit{$\mathcal{H}$-free} if
it does not contain any member of $\mathcal{H}$ as an induced subgraph.
When $\mathcal{H}$ consists of a single graph $H$ or two graphs $H_1$ and
$H_2$, we write $H$-free and $(H_1,H_2)$-free instead of $\{H\}$-free and $\{H_1,H_2\}$-free, respectively.
We say that $G$ is \textit{$k$-vertex-critical $\mathcal{H}$-free} if it is $k$-vertex-critical
and $\mathcal{H}$-free. We say that $G$ is \textit{$k$-critical $\mathcal{H}$-free} if $G$ is $\mathcal{H}$-free, $\chi(G)=k$ and for every
$\mathcal{H}$-free proper subgraph $G'$ of $G$ we have $\chi(G') \le k-1$.

 The problem of deciding if a graph is $k$-colorable is a well-known NP-complete problem. However, the picture changes if one restricts the structure of
 the graphs under consideration. For example, $k$-colorability can be decided in polynomial time for all $k$ for perfect graphs~\cite{GLS84} and $P_5$-free
 graphs~\cite{HKLSS10}. In fact, deciding $k$-colorability of $H$-free graphs remains NP-complete if $H$ contains an induced claw~\cite{H81,LG83} or cycle~\cite{KL07,MP96}.
 Moreover, the problem remains NP-complete  for $P_6$-free graphs for all $k \ge 5$ and for $P_7$-free graphs for all $k \ge 4$~\cite{H16}.
 Therefore, $P_5$ is the largest connected graph whose forbidding results in the existence of polynomial-time
 $k$-colorability algorithms for all $k$ (assuming P $\neq$ NP). However, the algorithm from~\cite{HKLSS10} produces a $k$-coloring if one exists,
 but does not produce an easily verifiable certificate when such coloring does not exist.

An algorithm is \textit{certifying} if it returns with each output a simple and easily verifiable certificate
that the particular output is correct. For a $k$-colorability algorithm to be certifying, it
should return a $k$-coloring if one exists and a $(k+1)$-vertex-critical induced subgraph if
the graph is not $k$-colorable.

The classification of all 12 4-vertex-critical $P_5$-free graphs was used to develop
a linear-time certifying 3-colorability algorithm for $P_5$-free graphs~\cite{BHS09,MM12}.
In contrast, there are infinitely
many $(k+1)$-vertex-critical $P_5$-free graphs for $k \ge 4$.
This led to significant interest in determining for which
subfamilies of $P_5$-free graphs the polynomial-time $k$-colorability algorithms can be extended to
certifying ones. In 2021, Cameron, Goedgebeur, Huang and Shi~\cite{CGHS21} obtained the following dichotomy result.

\begin{theorem}[\cite{CGHS21}]
		Let $H$ be a graph of order 4 and $k\ge 5$ be a fixed integer. Then there are infinitely many k-vertex-critical $(P_5,H)$-free graphs if and only if $H$ is $2P_2$ or $P_1+K_3$.
	\end{theorem}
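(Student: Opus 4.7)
The theorem is an ``iff'' over the $11$ graphs on $4$ vertices, so my plan is to split the argument into two directions: the \emph{sufficiency} direction (exhibit infinitely many $k$-vertex-critical $(P_5,H)$-free graphs when $H\in\{2P_2,P_1+K_3\}$) and the \emph{necessity} direction (show finiteness for each of the remaining nine graphs, namely $4P_1$, $P_2+2P_1$, $P_3+P_1$, $P_4$, $K_{1,3}$, $C_4$, paw, diamond and $K_4$). I would state the list of $11$ graphs up front and then treat the two ``bad'' $H$'s separately from the rest.

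For sufficiency, the starting point is the fact already noted in the introduction that for every fixed $k\ge 5$ there are already infinitely many $k$-vertex-critical $P_5$-free graphs. My plan is to identify (or construct ad hoc) an infinite subfamily that additionally avoids the relevant $H$. To guide the search I would pass to the complement, using the dualities $\overline{2P_2}=C_4$ and $\overline{P_1+K_3}=K_{1,3}$: the goal is then a family whose complements are respectively $C_4$-free or claw-free. Natural candidates are suitable blow-ups of small $k$-critical graphs (replacing vertices by cliques or by independent sets), tuned so as to both preserve $P_5$-freeness and destroy the forbidden subgraph; vertex-criticality would then be verified directly from the construction.

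For necessity I would run a case analysis grouped by the structural feature each $H$ forces. The case $H=P_4$ is immediate because $(P_5,P_4)$-free graphs are exactly cographs, which are perfect, so the only $k$-vertex-critical member is $K_k$. The cases $H\in\{4P_1,P_3+P_1,P_2+2P_1\}$ each give a small absolute bound on the independence number (directly for $4P_1$, and via a short case analysis based on an arbitrary edge for the other two). The cases $H\in\{K_4,\text{diamond},\text{paw}\}$ give a small bound on the clique number. In each of these two groups, $\chi$-boundedness of $P_5$-free graphs caps $\chi$, and combining the $\alpha$- or $\omega$-bound with $k$-vertex-criticality caps the order. The remaining cases $H=C_4$ and $H=K_{1,3}$ I would handle by appealing to structural results from the literature on $(P_5,C_4)$-free and $(P_5,K_{1,3})$-free graphs to derive a bound on $\chi$ and thereby on the order of any $k$-vertex-critical member.

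The main obstacle, in my view, is the claw case $H=K_{1,3}$: $(P_5,K_{1,3})$-free graphs form a structurally rich class closely tied to quasi-line graphs, so ruling out an infinite family of $k$-vertex-critical members requires a genuine structural argument rather than the crude Ramsey-type counting that dispatches the $\alpha$- and $\omega$-bounded cases. A secondary technical nuisance will be to verify actual vertex-criticality (not merely $k$-chromaticity) of the explicit sufficiency constructions, which typically requires checking removal of each vertex separately and exploiting a symmetry group of the construction.
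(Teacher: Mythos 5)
This statement is quoted verbatim from~\cite{CGHS21}; the present paper contains no proof of it, so I can only measure your outline against the argument in that reference. Your top-level plan (sufficiency for $2P_2$ and $P_1+K_3$, finiteness for the other nine four-vertex graphs) is the right skeleton, and two cases are correct as you state them: $H=P_4$ gives cographs, hence only $K_k$; and $H=4P_1$ gives $\alpha(G)\le 3$, so $|V(G)|\le R(4,k+1)$ because $\omega(G)\le\chi(G)=k$. But several of the remaining cases rest on steps that fail. The recurring non sequitur is ``a bound on $\chi$ for the class, combined with $k$-vertex-criticality, caps the order.'' A $k$-vertex-critical graph has $\chi=k$ by definition, so a class-wide bound $\chi\le c$ either shows the set of $k$-vertex-critical members is empty (when $k>c$) or yields no information (when $k\le c$); in neither case does it bound the number of vertices. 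This gap swallows your treatment of $K_4$, diamond, $C_4$ and $K_{1,3}$. For the claw, which you single out as the hard case, the correct argument is in fact elementary and is essentially \autoref{lem:p5,claw} of this paper: claw-freeness gives $\alpha(G[N(v)])\le 2$, $\omega\le\chi=k$ bounds cliques, so Ramsey bounds the maximum degree, and a connected $P_5$-free graph has diameter at most $3$, hence bounded order. For $K_4$ and diamond, bounded $\omega$ together with bounded $\chi$ does \emph{not} bound the order, and these cases need genuine structural work (paw, by contrast, is easy via Olariu's theorem: a connected paw-free graph is triangle-free or complete multipartite). A second concrete error: $(P_3+P_1)$-free and $(K_2+2P_1)$-free graphs do \emph{not} have bounded independence number --- complete bipartite graphs $K_{m,n}$ lie in both classes --- so the ``bound $\alpha$ from an arbitrary edge'' step is false, and those cases must exploit vertex-criticality (no comparable vertices, minimum degree at least $k-1$, existence of a triangle when $k\ge 4$, etc.) much more heavily.

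The sufficiency direction is also not yet a proof: ``suitable blow-ups tuned so as to preserve $P_5$-freeness and destroy the forbidden subgraph'' names a search space, not a construction. The content of that half of the theorem is an explicit infinite family together with a verification that every member has chromatic number exactly $k$ and that deleting any vertex decreases it; in~\cite{CGHS21} this is done with concrete families building on~\cite{HMRSV15}, and the criticality check is the bulk of the work. (Note that $2P_2$ is an induced subgraph of $P_5$, so the $(P_5,2P_2)$-free case is simply the $2P_2$-free case, which narrows where to look, but you still must exhibit and verify the family.)
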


This theorem completely solves the finiteness problem of $k$-vertex-critical $(P_5,H)$-free graphs for $|H|=4$. In~\cite{CGHS21}, the authors also posed the
natural question of which five-vertex graphs $H$ lead to finitely many $k$-vertex-critical $(P_5,H)$-free graphs.
Already quite some progress has been made towards solving this question. In particular, it is known that there are only finitely many $k$-vertex critical $(P_5,H)$-free
for $k \le 5$ when $H=C_5$~\cite{HMRSV15}, bull~\cite{HLX23} or chair~\cite{HL23} and for all $k$ when:
$H$ = dart~\cite{XJGH23}, banner~\cite{BGS22}, $P_2+3P_1$~\cite{CHS22}, $P_3+2P_1$~\cite{ACHS22},
            $ \overline{P_3+P_2}$ and gem~\cite{CGS}, $ K_{2,3}$ and $K_{1,4}$~\cite{KP17}, and $\overline{P_5}$~\cite{DHHMMP17}.
In contrast, there are infinitely many $k$-vertex-critical $(P_5,H)$-free graphs for $k > 5$ when
$H = 2K_2+P_1$, $P_3+P_2$, $K_3+2P_1$, co-dart, $diamond+ P_1$, $P_5$, $K_3+P_2$,
co-banner, butterfly, $C_5$, kite, $K_4+P_1$ and $\overline{claw+P_1}$. We refer the interested reader to~\cite{CH23} for more details.

\noindent {\bf Our contributions.} In this paper, we continue to study the finiteness of vertex-critical $(P_5,H)$-free graphs. Our main results are as follows:

\begin{theorem}\label{th:k-vertex-critical}
   For every fixed integer $k \ge 1$, there are finitely many $k$-vertex-critical $(P_5, K_{1,4}+P_1)$-free graphs.
\end{theorem}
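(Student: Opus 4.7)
The plan is to show that every $k$-vertex-critical $(P_5,K_{1,4}+P_1)$-free graph $G$ has $|V(G)|$ bounded by a function of $k$. Note $\omega(G)\le\chi(G)=k$, so cliques are already controlled; the task is to show that $G$ cannot be ``too spread out''.

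The strategy is to locate a bounded-size non-perfect substructure in $G$ as an anchor. If $G$ is perfect then, being $k$-vertex-critical, $G=K_k$. Otherwise, by the Strong Perfect Graph Theorem, $G$ contains an induced odd hole or odd antihole. Any $C_{2\ell+1}$ with $\ell\ge 3$ contains an induced $P_5$, so the only admissible odd hole is $C_5$; and since $\omega(\overline{C_{2\ell+1}})=\ell$ while $\omega(G)\le k$, any admissible odd antihole has length at most $2k+1$. Hence $G$ contains an induced ``anchor'' $H_0$ of bounded order, either $C_5$ or $\overline{C_{2\ell+1}}$ with $3\le\ell\le k$.

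Fix such an $H_0$ and partition $V(G)\setminus V(H_0)$ into classes $X_T:=\{v:N(v)\cap V(H_0)=T\}$. A $P_5$-free case analysis (a vertex with a single neighbour on $H_0$, or with a ``small'' neighbourhood such as two consecutive vertices of a $C_5$, is ruled out by an induced $P_5$ reaching into $H_0$) restricts the feasible non-empty types $T$ to a finite, explicit list depending only on $H_0$. For each such class $X_T$, I would show $\alpha(G[X_T])$ is bounded by a small constant: if $X_T$ contained an independent set $I$ of size $5$, then a vertex $w\in V(H_0)$ (chosen according to $T$) adjacent to four members of $I$ would induce a $K_{1,4}$, and the fifth member of $I$, non-adjacent to $w$ and to the other four, would complete an induced $K_{1,4}+P_1$, a contradiction. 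Combined with $\omega(G[X_T])\le k$, Ramsey's theorem then bounds $|X_T|$. The ``far'' set $X_\emptyset$ of vertices with no neighbour on $H_0$ is handled analogously: by $P_5$-freeness its vertices lie in a bounded number of BFS-layers from $H_0$, and each layer is controlled by iterating the same argument with the preceding layer added to the anchor.

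The main obstacle is the case analysis determining which types $T$ actually survive and, once they are pinned down, producing the witness vertex $w\in V(H_0)$ that realises the forbidden $K_{1,4}+P_1$ from a would-be large independent set in $X_T$. When $H_0=\overline{C_{2\ell+1}}$, the $P_5$-free type analysis has to be redone for antiholes rather than for $C_5$, requiring extra bookkeeping that grows with $k$ but stays finite. The $k$-vertex-criticality is used throughout, both for the bound $\omega(G)\le k$ and, in the treatment of $X_\emptyset$, via the $(k-1)$-colourability of proper induced subgraphs to prevent arbitrarily long BFS chains.
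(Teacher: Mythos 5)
Your overall frame --- anchor on a bounded imperfect structure via the Strong Perfect Graph Theorem, partition the rest of the graph by neighbourhood type on the anchor, and bound each class by bounding its independence number and invoking Ramsey together with $\omega(G)\le k$ --- is the same as the paper's. But the execution has a flaw and, more importantly, a missing case that is the hardest part of the actual proof.

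First, the witness construction for the forbidden $K_{1,4}+P_1$ is wrong as stated. All vertices of a class $X_T$ have the \emph{same} neighbourhood $T$ on the anchor, so if $w\in T$ is adjacent to four members of an independent set $I\subseteq X_T$, it is adjacent to the fifth member as well; that fifth vertex therefore cannot play the role of the pendant $P_1$. The correct witness (the one the paper uses) takes $|I|=4$, a vertex $w\in T$, and a \emph{second} anchor vertex $w'\notin T$ with $ww'\notin E$; since consecutive vertices of an antihole are nonadjacent, such a pair exists whenever $\emptyset\neq T\subsetneq V(H_0)$. This is a repairable slip. Second, and decisively, your argument says nothing about the class of vertices complete to the whole anchor (the set $S_{2t+1}$ in the paper's notation). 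For that class no vertex $w'$ outside $T$ exists, its independence number need not be bounded by any constant, and your ``finite explicit list of feasible types'' cannot exclude it. The paper bounds this class by an entirely separate mechanism: a Ramsey-type lemma on the complement (\autoref{ST}), a homogeneous-set argument via \autoref{lem:homoge}, and an induction on $k$ (the components of the homogeneous remainder are $m$-vertex-critical for some $m<k$, hence of bounded order by the induction hypothesis and bounded in number by the pigeonhole principle). Your proposal contains no induction on $k$ and no substitute for this step, so it does not go through as written. Your treatment of $X_\emptyset$ is also thinner than needed --- one must separately bound the \emph{number} of components of $S_0$, which the paper does using \autoref{lem:XY} --- but that part is at least repairable in the spirit you describe.
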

\begin{theorem}\label{th:5-vertex-critical}
For every fixed integer $s \ge 1$, there are finitely many $5$-vertex-critical $(P_5, K_{1,s}+P_1)$-free graphs.
\end{theorem}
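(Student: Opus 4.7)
The plan is to combine the Strong Perfect Graph Theorem with a structural analysis around a fixed induced $C_5$, using $K_{1,s}+P_1$-freeness to uniformly bound the independence number of the resulting attachment classes.

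First, if $G$ is perfect then $\omega(G)=\chi(G)=5$, so $G$ contains $K_5$, and $5$-vertex-criticality forces $G=K_5$. Otherwise, by the Strong Perfect Graph Theorem together with $P_5$-freeness ($C_n$ with $n\ge 7$ contains $P_5$, and any odd antihole $\overline{C_{2k+1}}$ with $k\ge 5$ satisfies $\omega\ge 5$, which would force $G=K_5$ and contradict imperfection), $G$ must contain an induced $C_5$, $\overline{C_7}$, or $\overline{C_9}$. I will focus on the main case in which $G$ contains an induced $C_5 = c_1 c_2 c_3 c_4 c_5$; the two small-antihole cases can be handled analogously. I then classify each $u \in V(G)\setminus V(C)$ by $N(u)\cap V(C)$. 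A routine case analysis shows that $P_5$-freeness allows only a constant number of attachment types, giving a partition of $V(G)\setminus V(C)$ into a constant number of classes $A_1,\ldots,A_m$.

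The heart of the argument is to show $|A_i|\le g(s)$ for each $i$, with $g$ independent of $G$. For a class $A_i$ whose vertices share a common neighbour $c \in V(C)$, suppose $\{u_1,\ldots,u_s\}\subseteq A_i$ is independent; then $\{c,u_1,\ldots,u_s\}$ induces a $K_{1,s}$, and in most cases one can exhibit a witness vertex $w$ (typically a vertex of $V(C)$ not in the common neighbourhood pattern of $A_i$) that is non-adjacent to $c$ and to every $u_i$, producing a forbidden induced $K_{1,s}+P_1$. This gives $\alpha(G[A_i])\le s-1$. Combined with $\omega(G[A_i])\le \omega(G)\le 4$ (otherwise $G=K_5$) and the $\chi$-boundedness of $P_5$-free graphs ($\chi\le f(\omega)$ for an absolute function $f$), we obtain $|A_i|\le \alpha(G[A_i])\cdot \chi(G[A_i]) \le (s-1)\cdot f(4)$. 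Summing over the constantly many classes bounds $|V(G)|$ by a function of $s$ alone.

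The main obstacle will be the class $T$ of vertices adjacent to every vertex of $C$, for which no vertex of $C$ can serve as a non-adjacent witness. For $T$, I first argue that $\omega(G[T])\le 2$: a triangle in $T$ together with any consecutive pair $c_i,c_{i+1}$ would yield $K_5$, so $G[T]$ is triangle-free and $P_5$-free, and therefore of bounded chromatic number. One then selects the star centre inside $T$ itself rather than on $C$ and uses the minimum degree bound $\delta(G)\ge 4$ (valid in every $5$-vertex-critical graph), together with a more careful choice of witness $w$ outside $T\cup V(C)$, to bound $|T|$ in terms of $s$. The sporadic cases in which $G$ contains $\overline{C_7}$ or $\overline{C_9}$ but no $C_5$ can be treated by applying the same template to those fixed subgraphs.
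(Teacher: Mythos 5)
Your opening moves match the paper's: rule out $K_5$ and long antiholes, invoke the Strong Perfect Graph Theorem to find an induced $C_5$ or $\overline{C_7}$, partition the remaining vertices by their attachment to it, and use $K_{1,s}+P_1$-freeness to get $\alpha\le s-1$ (hence a Ramsey bound) for every attachment class that has both a common neighbour and a common non-neighbour on the cycle. That part is sound and is exactly how the paper bounds $S_1\cup\dots\cup S_{2t}$. The gaps are in the two classes where this witness argument is unavailable, and these are precisely where the paper has to do real work.

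First, you never treat $S(\emptyset)$, the vertices anticomplete to the chosen $C_5$ (or $\overline{C_7}$). These have no star centre on the cycle at all, so no independence bound follows from $K_{1,s}+P_1$-freeness, and such vertices can exist. The paper handles them by showing the relevant components are homogeneous sets, applying \autoref{lem:homoge} to see that each component is itself a smaller vertex-critical graph (hence from a finite list), and bounding the number of components via \autoref{lem:XY}; none of this appears in your sketch. Second, your plan for the dominating class $T$ does not go through. You want $\alpha(G[T])\le s-1$ via a star centre inside $T$ and a witness outside $T\cup V(C)$, but nothing guarantees a vertex non-adjacent to the centre and to all $s$ leaves, and indeed no bound on $\alpha(G[T])$ in terms of $s$ is available: in the paper $S_5$ ends up being an arbitrary stable set whose size is controlled only by the no-comparable-vertices property (distinct vertices of a stable, $C$-complete set must have distinct neighbourhoods in the rest of the graph, giving $|S_5|\le 2^{|\cup_{i\le 4}S_i|}$). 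The paper gets the stability of $S_5$ (and the emptiness of $S_7$ in the $\overline{C_7}$ case) by a trick you are missing: it puts the $5$-chromatic graphs obtained by adding a dominating edge to $C_5$ and a dominating vertex to $\overline{C_7}$ into the initial exclusion list, since a $5$-vertex-critical graph containing a $5$-chromatic induced subgraph must equal it. Your appeal to $\delta(G)\ge4$ and to the bounded chromatic number of the triangle-free $P_5$-free graph $G[T]$ is not a substitute --- bounded $\chi$ alone bounds nothing, as $K_{n,n}$ shows.
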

\begin{theorem}\label{th:k3+2p1}
For every fixed integer $k \ge 1$, there are finitely many $k$-vertex-critical $(P_5, \overline{K_3+2P_1})$-free graphs.
\end{theorem}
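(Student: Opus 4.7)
The plan is to proceed by strong induction on $k$, with the base cases $k \le 4$ covered by the classical finiteness of $k$-vertex-critical $P_5$-free graphs in that range. Fix $k \ge 5$ and let $G$ be a $k$-vertex-critical $(P_5, \overline{K_3+2P_1})$-free graph; the goal is to bound $|V(G)|$ by a function of $k$. Standard arguments for vertex-critical graphs give that $G$ is connected, has no clique cutset, and satisfies $\omega(G) \le k - 1$ unless $G = K_k$. If $G$ has a universal vertex $v$, then $G - v$ is $(k-1)$-vertex-critical and $(P_5, \overline{K_3+2P_1})$-free, hence of bounded size by induction, so I may assume $G$ has no universal vertex. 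The key consequence of $\overline{K_3+2P_1}$-freeness that I would exploit throughout is the following: \emph{for every independent triple $\{a,b,c\}$ of $G$, the common neighborhood $N(a) \cap N(b) \cap N(c)$ is itself an independent set}, since an edge inside that common neighborhood would complete an induced copy of $\overline{K_3+2P_1}$.

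Next, I would invoke a structure theorem of Bacs\'o--Tuza type for connected $P_5$-free graphs, producing a dominating set $D$ of $G$ which is either a clique or an induced $P_3$; in either case $|D| \le \max(3,\omega(G)) \le k-1$. Then I partition $V(G) \setminus D$ into classes $V_T = \{v : N(v) \cap D = T\}$ indexed by the nonempty subsets $T \subseteq D$; there are at most $2^{k-1}$ such classes, so it suffices to bound each $|V_T|$ by a function of $k$. The classes with $|T| \ge 2$ (when $D$ is a clique) are easy: any independent triple $\{a,b,c\} \subseteq V_T$ would place the clique subset $T$ inside $N(a) \cap N(b) \cap N(c)$, forcing $T$ itself to be independent---a contradiction. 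Hence $\alpha(V_T) \le 2$, and together with $\omega(V_T) \le k-1$, Ramsey's theorem bounds $|V_T|$ by a function of $k$. Analogous Ramsey-type bounds handle the classes with $|T| \ge 2$ when $D$ is an induced $P_3$, by applying the common-neighborhood rule to independent sub-triples of $D \cup V_T$.

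The main obstacle is bounding $|V_T|$ when $|T| = 1$, say $T = \{u\}$, and the analogous small-$T$ subcases when $D$ is a $P_3$; here the clique-covering trick breaks down because $T$ is already independent. The plan is to further refine $V_{\{u\}}$ based on its internal adjacencies and on its attachments to the other classes, then to argue via $P_5$-freeness that an induced path of the form $w - v - u - v' - w'$, with $v, v' \in V_{\{u\}}$ non-adjacent and $w, w'$ non-neighbors of $u$ lying in $G \setminus V_{\{u\}}$, must be forbidden; since $u$ has a non-neighbor in $D$ (when $D$ is a $P_3$) or the extensions $w,w'$ must come from other classes $V_{T'}$ with $u \notin T'$ (when $D$ is a clique), this strongly restricts how $V_{\{u\}}$ can attach to the rest of $G$. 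Combined with the independent-common-neighborhood rule applied to independent triples of $V_{\{u\}}$, these constraints are expected to force both $\alpha(V_{\{u\}})$ and $\omega(V_{\{u\}})$ to be bounded by functions of $k$, whereupon a final Ramsey application bounds $|V_{\{u\}}|$. Summing the bounded contributions over the at most $2^{k-1}$ classes then yields a bound on $|V(G)|$ depending only on $k$, completing the induction.
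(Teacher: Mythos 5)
Your overall strategy diverges from the paper's: the paper never uses a dominating set. It first disposes of the cases where $G$ contains $K_k$ or $\overline{C_{2k-1}}$, then invokes the Strong Perfect Graph Theorem to find an induced $\overline{C_{2t+1}}$ with $2\le t\le k-2$, partitions $V(G)$ by neighborhoods on that antihole, and bounds each piece using a layered Ramsey-type lemma together with homogeneous-set arguments, \autoref{lem:homoge}, the inductive hypothesis and the pigeonhole principle. Your opening moves (Bacs\'o--Tuza dominating clique or $P_3$, the observation that the common neighborhood of an independent triple is independent, and the Ramsey bound on the classes $V_T$ with $|T|\ge 2$ when $D$ is a clique) are all correct and would be a legitimate alternative start.

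However, there is a genuine gap exactly where you acknowledge the ``main obstacle,'' and it is not a detail one can wave past: the classes $V_{\{u\}}$ (and $V_{\{d_1,d_3\}}$ when $D$ is an induced $P_3$, where $T$ is independent and your clique-in-the-common-neighborhood trick also fails) are the entire difficulty of the theorem. Your plan is to show that $\alpha(V_{\{u\}})$ is bounded by a function of $k$ via forbidden $P_5$'s of the shape $w\text{--}v\text{--}u\text{--}v'\text{--}w'$, but this conclusion is almost certainly unattainable: nothing in $(P_5,\overline{K_3+2P_1})$-freeness prevents $u$ from having a large independent set of private neighbors whose outside attachments are pairwise adjacent or overlapping, so no such induced $P_5$ arises. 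Indeed, the paper never bounds the independence number of the analogous sets ($S_0$ and $S(v_i,v_{i+1})$ relative to the antihole); instead it shows the relevant components are homogeneous, applies \autoref{lem:homoge} to conclude they are $m$-vertex-critical for some $m<k$, and then uses the induction hypothesis plus pigeonhole to bound both their sizes and their number, together with the BFS-layered Ramsey lemma (\autoref{lem:k3+2p1}) for the non-homogeneous part. Without an argument of this kind (homogeneous sets, recursion on smaller critical graphs, and a layering lemma exploiting a vertex complete to the set), your proof does not close, and the final Ramsey application you envisage has no independence-number bound to feed on.
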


We prove two Ramsey-type statements (see \autoref{ST} and \autoref{lem:k3+2p1}) which allow us to prove \autoref{th:k-vertex-critical}
 and \autoref{th:k3+2p1} by induction on $k$. In particular, we perform a careful structural analysis via the Strong Perfect Graph Theorem combined with the pigeonhole principle
and Ramsey's Theorem based on the properties of vertex-critical graphs. Our results imply the existence of a polynomial-time certifying algorithm to decide the $k$-colorability of
$(P_5,H)$-free graphs for $k \ge 1$ and $H = \overline{K_3+2P_1}$ or $K_{1,4} +P_1$ and a  polynomial-time certifying algorithm to decide the 4-colorability of $(P_5, K_{1,s}+P_1)$-free graphs.
Moreover, our results affirmatively answer the finiteness problem posed in~\cite{CGHS21} for $H \in \{K_{1,3}+P_1, \overline{K_3+2P_1}\}$. This only leaves the cases of  $H \in \{P_4+P_1,\overline{diamond+P_1},C_4+P_1,\overline{P_3+2P_1},W_4\}$ for $k \ge 5$, $H \in \{chair,bull,K_5\}$ for $k \ge 6$ and $H = K_5 - e$ for $k=5,6,7$ as open problems.

The remainder of the paper is organized as follows. We present some preliminaries in Section~\ref{Preliminarlies}. We show that there are finitely many $k$-vertex-critical ($P_5$, $K_{1,4}+P_1$)-free graphs for all $k \ge 1$ in Section~\ref{sec:k1,4}. In Section~\ref{sec:k1,s}, we prove that there are finitely many 5-vertex-critical $(P_5,K_{1,s})$-free graphs. Next, we show that there are finitely many $k$-vertex-critical $(P_5,\overline{K_3+2P_1})$-free graphs for all $k \ge 1$ in Section~\ref{sec:k3+2p1}. Finally, in Section~\ref{sec:characterization} we characterize all $5$-vertex-critical $(P_5,H)$-free graphs for $H \in \{K_{1,3}+P_1,K_{1,4}+P_1,\overline{K_3+2P_1}\}$ using a specialised graph generation algorithm.

\section{Preliminaries}\label{Preliminarlies}
For general graph theory notation we follow~\cite{BM08}. For $k\ge 4$, an induced  cycle of length $k$ is called a {\em $k$-hole}. A $k$-hole is an {\em odd hole} (respectively {\em even hole}) if $k$ is odd (respectively even). A {\em $k$-antihole} is the complement of a $k$-hole. Odd and even antiholes are defined analogously.
	
	Let $G=(V,E)$ be a graph. If $uv\in E(G)$, we say that $u$ and $v$ are {\em neighbors} or {\em adjacent}, otherwise $u$ and $v$ are {\em nonneighbors} or  {\em nonadjacent}. The {\em neighborhood} of a vertex $v$, denoted by $N_G(v)$, is the set of neighbors of $v$. For a set $X\subseteq V(G)$, let $N_G(X)=\cup_{v\in X}N_G(v)\setminus X$. We shall omit the subscript whenever the context is clear. For $x \in V(G)$ and $S\subseteq V(G)$, we denote by $N_S(x)$ the set of neighbors of $x$ that are in $S$, i.e., $N_S(x)=N_G(x)\cap S$. For two sets $X,S\subseteq V(G)$, let $N_S(X)=\cup_{v\in X}N_S(v)\setminus X$.

For $X,Y\subseteq V(G)$, we say that $X$ is {\em complete} (resp. {\em anticomplete}) to $Y$ if every vertex in $X$ is adjacent (resp. nonadjacent) to every vertex in $Y$. If $X=\{x\}$, we write ``$x$ is complete (resp. anticomplete) to $Y$'' instead of ``$\{x\}$ is complete (resp. anticomplete) to $Y$''. If a vertex $v$ is neither complete nor anticomplete to a set $S$, we say that $v$ is {\em mixed} on $S$. For a vertex $v\in V$ and an edge $xy\in E$, if $v$ is mixed on $\{x,y\}$, we say that $v$ is {\em mixed} on $xy$. For a set $H\subseteq V(G)$, if no vertex in $V(G) \setminus H$ is mixed on $H$, we say that $H$ is a {\em homogeneous set}, otherwise $H$ is a {\em nonhomogeneous set}. We say that a vertex $w$ {\em distinguishes} two vertices $u$ and $v$ if $w$ is adjacent to exactly one of $u$ and $v$.

 A vertex subset $S\subseteq V(G)$ is {\em stable} if no two vertices in $S$ are adjacent. A {\em clique} is the complement of a stable set.
 The clique number of $G$, denoted by $\omega(G)$, is the size of a largest clique in $G$.
 Two nonadjacent vertices $u$ and $v$ are said to be {\em comparable} if $N(v)\subseteq N(u)$ or $N(u)\subseteq N(v)$.
 For an induced subgraph $A$ of $G$, we write $G-A$ instead of $G-V(A)$. For $S\subseteq V$, the subgraph \emph{induced} by $S$ is denoted by $G[S]$. We use $R(s,t)$ to denote the Ramsey number where $s$ and $t$ are two parameters, namely the minimum positive integer $n$ such that every graph of order
 $n$ contains either a stable set of size $s$ or a clique of size $t$.

 Let $G=(V,E)$ be a graph and $H$ be an induced subgraph of $G$.
We partition $V\setminus V(H)$ into subsets with respect to $H$ as follows:
for any $X\subseteq V(H)$, we denote by $S(X)$ the set of vertices
in $V\setminus V(H)$ that have $X$ as their neighborhood among $V(H)$, i.e.,
\[S(X)=\{v\in V\setminus V(H): N_{V(H)}(v)=X\}.\]
For $0\le m\le |V(H)|$, we denote by $S_m$ the set of vertices in $V\setminus V(H)$ that have exactly $m$
neighbors in $V(H)$. Note that $S_m=\bigcup_{X\subseteq V(H): |X|=m}S(X)$.

 We proceed with a few useful results that will be used later. The first folklore property of vertex-critical graph is that such graphs
 contain no comparable vertices. A generalization of this property was presented in~\cite{CGHS21}.

     \begin{lemma}[\cite{CGHS21}] \label{lem:XY}
		Let $G$ be a $k$-vertex-critical graph. Then $G$ has no two nonempty disjoint subsets $X$ and $Y$ of $V(G)$ that satisfy all the following conditions.
		\begin{itemize}
			\item $X$ and $Y$ are anticomplete to each other.
			\item $\chi(G[X])\le\chi(G[Y])$.
			\item Y is complete to $N(X)$.
		\end{itemize}
	\end{lemma}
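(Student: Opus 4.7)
The plan is to argue by contradiction: assume sets $X$ and $Y$ with the listed properties exist, and use them to build a proper $(k-1)$-coloring of $G$, which contradicts $\chi(G)=k$. The intuition behind the construction is that the third condition forces $N(X)$ to avoid every color appearing on $Y$, so the palette that is ``pinned'' on $Y$ in any coloring of $G-X$ contains enough unused colors on $N(X)$ to recolor $X$ — and the second condition guarantees that this palette is large enough.

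Concretely, I would start by picking any $x \in X$ and invoking vertex-criticality of $G$ to get a proper $(k-1)$-coloring $c$ of $G-x$. Restricting $c$ to $V(G)\setminus X$ gives a proper $(k-1)$-coloring of $G-X$, and in particular a proper coloring of $G[Y]$. Let $C_Y$ be the set of colors that appear on $Y$ under this restricted coloring. Then $|C_Y|\ge \chi(G[Y])\ge \chi(G[X])$. Fix any proper coloring of $G[X]$ with $\chi(G[X])$ colors and relabel its palette into $C_Y$; combining this with $c$ on $V(G)\setminus X$ defines an assignment of colors to all vertices of $G$ using at most $k-1$ colors.

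To check properness of this combined coloring, the only edges that require attention are those between $X$ and $N(X)$: $X$ is anticomplete to $Y$ by hypothesis, and $X$ has no neighbors in $V(G)\setminus(X\cup N(X))$ by definition of $N(X)$. For any $w\in N(X)$, the third hypothesis says $w$ is adjacent to every $y\in Y$, so $c(w)\ne c(y)$ for all $y\in Y$, which means $c(w)\notin C_Y$. Consequently the colors used on $X$ are disjoint from the colors used on $N(X)$, so the combined coloring is proper. This yields a $(k-1)$-coloring of $G$, the desired contradiction.

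The main delicate point is precisely the last verification, which is where the completeness hypothesis $Y$ to $N(X)$ is used: without it, a color in $C_Y$ might appear on some neighbor of $X$ and block the recoloring. The rest of the argument is essentially the standard criticality/domination trick, lifted from single comparable vertices to the pair of sets $X,Y$ via the chromatic inequality $\chi(G[X])\le\chi(G[Y])$.
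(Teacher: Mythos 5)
Your proof is correct. The paper states this lemma as a cited result from~\cite{CGHS21} without reproducing a proof, and your argument is exactly the standard one: take a $(k-1)$-coloring of $G-x$ for some $x\in X$, restrict it to $G-X$, observe that the at least $\chi(G[Y])\ge\chi(G[X])$ colors appearing on $Y$ are all absent from $N(X)$ because $Y$ is complete to $N(X)$, and reuse them to color $X$, contradicting $\chi(G)=k$.
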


\begin{lemma}[\cite{XJGH23}]\label{lem:homoge}
Let $G$ be a $k$-vertex-critical graph and $S$ be a homogeneous set of $V(G)$.
For each component $A$ of $G[S]$, if $\chi(A) = m$ with $m < k$, then $A$ is an $m$-vertex-critical graph.
\end{lemma}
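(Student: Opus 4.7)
The plan is to argue by contradiction. Suppose some component $A$ of $G[S]$ has $\chi(A)=m<k$ but is not $m$-vertex-critical. Since $\chi(A-v)\le \chi(A)=m$ for every $v\in V(A)$, failure of criticality means there exists some $v\in V(A)$ with $\chi(A-v)=m$. I will use this $v$, together with the $k$-vertex-criticality of $G$ (which furnishes a proper $(k-1)$-coloring $c$ of $G-v$), to construct a proper $(k-1)$-coloring of all of $G$, contradicting $\chi(G)=k$.

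The structural input I need is an identification of the external neighborhood of $V(A)$. Since $A$ is a component of $G[S]$, $V(A)$ is anticomplete to $S\setminus V(A)$; and since $S$ is homogeneous, every vertex of $V(G)\setminus S$ is either complete or anticomplete to $S$, and hence to $V(A)\subseteq S$. The external neighborhood of $V(A)$ in $G$ is therefore exactly the set $N$ of vertices outside $S$ that are complete to $S$. Because $V(A)\setminus\{v\}$ is complete to $N$ in $G-v$, the color sets $C_A := c(V(A)\setminus\{v\})$ and $C_N := c(N)$ are disjoint, and $|C_A|\ge \chi(A-v)=m$.

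The final step is a recoloring of $V(A)$. I fix any proper $m$-coloring $c_A$ of $A$ (which exists because $\chi(A)=m$), select $m$ distinct colors from $C_A$, and pick a bijection $\phi$ from the $m$ color classes of $c_A$ onto these chosen colors. Overwriting the colors on $V(A)$ via $\phi\circ c_A$ while leaving the rest of $c$ unchanged yields a proper coloring of $G$ using at most $k-1$ colors: properness inside $V(A)$ is inherited from $c_A$, and properness across the cut to $N$ holds because the new palette is contained in $C_A$ and $C_A\cap C_N=\emptyset$; all other edges are untouched. This is the desired contradiction.

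The only delicate point is the inequality $|C_A|\ge m$, and this is exactly where the hypothesis $\chi(A-v)=m$ is used: it forces $c$ to spend at least $m$ colors on $V(A)\setminus\{v\}$, providing the "budget" needed to embed a full $m$-coloring of $A$ into $c$'s existing palette without touching the colors on $N$. Once that budget is in hand, the recoloring is essentially mechanical, and the argument does not require an appeal to \autoref{lem:XY}.
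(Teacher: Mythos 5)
Your argument is correct: the identification of $N_G(V(A))$ as the set of vertices outside $S$ that are complete to $S$, the disjointness of $C_A$ and $C_N$ forced by completeness, and the budget $|C_A|\ge\chi(A-v)=m$ together make the recoloring go through and yield a $(k-1)$-coloring of $G$, contradicting $\chi(G)=k$. The paper itself only cites this lemma from~\cite{XJGH23} without reproving it, and your proof is exactly the standard recoloring argument one expects there; no appeal to \autoref{lem:XY} is needed, as you note.
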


The following two lemmas are two examples of Ramsey-type arguments which say
that if a vertex subset in a graph has large size, then it has large chromatic
number.

\begin{lemma}\label{ST}
Let $G$ be a $(K_{1,s}+P_1)$-free graph where $s \ge 1$ and $c,d$ be two fixed integers. Let $S,T$ be two disjoint subsets of $V(G)$ such that
$|S| \le c$, $\chi(G[T]) \le d$ and $\overline{G[T \cup S]}$ is connected. If for any $w \in S$ there exists
a vertex $w' \in V(G)\setminus{S\cup T}$ such that $ww' \notin E(G)$ and $w'$ is complete to $T$, then $T$ is bounded.
\end{lemma}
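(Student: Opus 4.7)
The plan is to fix a maximum stable set $T^*\subseteq T$ of $G[T]$ (so $|T^*|\ge |T|/d$ since $\chi(G[T])\le d$) and to show $|T^*|\le 2s-2$. Since $|T|\le d|T^*|$, this yields $|T|\le 2d(s-1)$, proving the lemma. Suppose for contradiction that $|T^*|\ge 2s-1$. I may also assume $S\ne\emptyset$, since otherwise the hypothesis ``for each $w\in S$, $\exists w'\dots$'' is vacuous and one checks the conclusion can genuinely fail (e.g.\ take $T$ to be a large stable set with $S=\emptyset$); the lemma is really about the case $S\ne\emptyset$.

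The argument rests on three structural properties that $(K_{1,s}+P_1)$-freeness forces on $T^*$. \emph{(i)} Each $w\in S$ has at most $s-1$ nonneighbors in $T^*$, because any $s$ nonneighbors $t_1,\dots,t_s\in T^*$ of $w$, together with the witness $w'$ as centre (which is complete to $T$ and lies outside $T\cup S$) and $w$ itself as the isolated vertex, form an induced $K_{1,s}+P_1$. In particular each $w\in S$ has at least $s$ neighbors in $T^*$. \emph{(ii)} Every $v\in V(G)\setminus T^*$ is either complete to $T^*$ in $G$ (I call such $v$ \emph{full}, and write $F$ for this set) or has at most $s-1$ neighbors in $T^*$ (\emph{sparse}, set $M$); otherwise $v$ itself centres a forbidden $K_{1,s}+P_1$ on $s$ of its neighbors and one of its nonneighbors in $T^*$. \emph{(iii)} $F$ is complete in $G$ to $M$: if $v\in F$ and $u\in M$ were a nonedge, then $u$ would have at least $|T^*|-(s-1)\ge s$ nonneighbors in $T^*$, and centring $v$ on $s$ of them (automatically adjacent to $v$, as $v$ is full) with $u$ as the isolated vertex would again give $K_{1,s}+P_1$.

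With these in hand the contradiction is short. By (i) and (ii), every $w\in S$ lies in $F$; the witnesses $w'$ lie in $F$ trivially. Decompose $T=T^*\cup F_T\cup M_T$ where $F_T=F\cap T$ and $M_T=M\cap T$. Then (ii) and (iii) imply that in $G$ both $T^*$ and $M_T$ are complete to $F_T\cup S$. Hence $\overline{G[T\cup S]}$ has no edge between the two nonempty sets $T^*\cup M_T$ and $F_T\cup S$ (the first contains $T^*\ne\emptyset$, the second contains $S\ne\emptyset$), contradicting the assumption that $\overline{G[T\cup S]}$ is connected.

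I anticipate the main obstacle to be property (iii): it is the step that actually uses the quantitative bound $|T^*|\ge 2s-1$, rather than just the qualitative ``large stable set'' intuition, and it is precisely the fact that makes the complement split cleanly. Once properties (i)–(iii) are in place the disconnection argument is immediate. Notably the resulting bound $|T|\le 2d(s-1)$ does not depend on $c$.
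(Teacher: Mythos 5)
Your proof is correct, and it takes a genuinely different route from the paper's. The paper runs a breadth-first layering in the complement starting from $N_0=S$ (a vertex of $T$ enters layer $N_i$ once it has a nonneighbour in $N_{i-1}$), shows each layer has stability number at most $s-1$ and hence, by Ramsey's theorem, contributes at most $R(s,d)$ vertices per predecessor, and caps the number of layers at $2d+1$ because layers at complement-distance greater than one are complete to each other in $G$; this yields a bound of order $c\cdot R(s,d)^{O(d)}$. You instead fix a maximum stable set $T^*$ of $G[T]$ and show that if $|T^*|\ge 2s-1$ then your full/sparse trichotomy forces $T^*\cup M_T$ to be complete in $G$ to $F_T\cup S$, disconnecting $\overline{G[T\cup S]}$; all three structural claims (i)--(iii) check out, and the resulting bound $|T|\le d|T^*|\le 2d(s-1)$ is explicit, polynomial, free of Ramsey numbers, and independent of $c$ --- considerably sharper than what the paper obtains. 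Your caveat about $S=\emptyset$ is well taken: the lemma as literally stated fails there (a large stable set $T$ with $G=G[T]$ is $(K_{1,s}+P_1)$-free, has $\chi=1$, and has connected complement), and the paper's own proof silently makes the same assumption, since with $N_0=S=\emptyset$ all of its layers are empty and cover none of $T$; in the paper's applications $S$ is the nonempty seed set, so nothing breaks downstream. The main thing the paper's layering buys in exchange for the weaker bound is that essentially the same scheme is reused for the companion \autoref{lem:k3+2p1} on $\overline{K_3+2P_1}$-free graphs, where an argument pivoting on a single large stable set is not as readily available.
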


\begin{proof}
       Let $N_0 = S$, $N_i = \{v|v \in T\setminus {\cup_{j=0}^{i-1}N_j}$, $v $ has a nonneighbor in $N_{i-1}\}$
       where $i \ge 1$. Let $u \in N_{i-1}$ and $M_u$ denote the set of nonneighbors of $u$ in $N_i$.
       Since $\overline{G[T \cup S]}$ is connected, $N_i \neq \emptyset$.

       First, we show that $\alpha(G[M_u]) \le s-1$. Suppose not. Let $u_1,u_2,\ldots,u_s$ be $s$ pairwise nonadjacent vertices
       in $G[M_u]$. If $i=1$, let $u'\in V(G)\setminus{S\cup T}$ be the vertex such that $uu' \notin E(G)$ and $u'$ is complete to $T$.
        Then $\{u',u,u_1,u_2,\ldots,u_s\}$ induces a $K_{1,s}+P_1$. If $i \ge 2$, let $u'$ be a nonneighbor
       of $u$ in $N_{i-2}$. Then $\{u',u,u_1,u_2,\ldots,u_s\}$ induces a $K_{1,s}+P_1$. So $\alpha(G[M_u]) \le s-1$.

       Next, we show that each $N_i$ is bounded. Since $\omega(G[M_u]) \le \chi(G[T]) \le d$ and $\alpha(G[M_u]) \le s-1$, $|M_u|=|G[M_u]|
       \le R(s,d)$ by Ramsey's Theorem. Since $|N_0|=|S|\le c $ and $|N_i|\le \sum_{u \in N_{i-1}}|M_u|$, $N_i$ is bounded.

       Finally, we show that $i \le 2d +1$. Suppose not. By the definition of $N_i$, $N_i$ and $N_j$ are complete when $|i-j|>1$.
       Then we take a vertex $u_i \in N_i$ where $i$ is even. Now, $\{u_2,u_4,\ldots,u_{2\chi(G[T])+2}\}$ induces a $K_{\chi(G[T])+1}$, a contradiction. This shows $i \le 2\chi(G[T]) +1 \le 2d +1$.

       Therefore, $T$ is bounded.
       \end{proof}

       \begin{lemma}\label{lem:k3+2p1}
       Let $G$ be a $(P_m,\overline{K_3+2P_1})$-free graph and $c,d$ be two fixed integers. Let $S,T$ be two disjoint subsets of $V(G)$ such that $|S| \le c$, $\chi(G[T]) \le d$ and $G[T \cup S]$ is connected. If every vertex in $S$ is not adjacent to at least one of $x_1$ and
       $x_2$ where $x_1$ and $x_2$ are two arbitrary nonadjacent vertices in $T$ and there exists a vertex $w \in V(G)\setminus{(S\cup T)}$ such that $w$ is complete to $S \cup T$, then $T$ is bounded.
       \end{lemma}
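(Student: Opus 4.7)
The plan is to adapt the Ramsey-style argument of \autoref{ST} to the $\overline{K_3+2P_1}$-free setting. Here it is $G[T\cup S]$ itself (rather than its complement) that is assumed to be connected, so the natural move is to run breadth-first search in $G$ starting from $S$ and to use the vertex $w$ as the ``anchor'' whose presence forbids a large independent set at every BFS level.

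Set $N_0 = S$ and, for $i\ge 1$, let $N_i$ consist of those vertices in $T\setminus(N_0\cup N_1\cup\cdots\cup N_{i-1})$ that have a neighbor in $N_{i-1}$. Since $G[T\cup S]$ is connected, $T = \bigcup_{i\ge 1} N_i$. The first observation is that, because $G$ is $P_m$-free and any shortest path in the induced subgraph $G[T\cup S]$ is induced in $G$, a shortest path from $S$ to a vertex of $N_i$ is an induced path on $i+1$ vertices; hence $i+1\le m-1$, so only the layers $N_1,\dots,N_{m-2}$ can be nonempty.

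The heart of the argument is the claim that, for every $i\ge 1$ and every $u\in N_{i-1}$, the set $M_u := N(u)\cap N_i$ satisfies $\alpha(G[M_u])\le 2$. If three pairwise nonadjacent $v_1,v_2,v_3\in M_u$ existed, then $w\sim u$ and $w\sim v_j$ for each $j$ (since $w$ is complete to $S\cup T$), while $u\sim v_j$ holds by definition of $M_u$; consequently $\{u,w,v_1,v_2,v_3\}$ would induce a $\overline{K_3+2P_1}$, with independent triple $\{v_1,v_2,v_3\}$ and dominating edge $uw$, contradicting the forbidden-subgraph hypothesis. Since $M_u\subseteq T$ we also have $\omega(G[M_u])\le \chi(G[T])\le d$, and Ramsey's theorem then yields $|M_u| \le R(3,d+1)-1$.

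Because $N_i\subseteq \bigcup_{u\in N_{i-1}} M_u$, iterating from $|N_0|=|S|\le c$ bounds every $|N_i|$ by $c\cdot(R(3,d+1)-1)^i$, and summing over the at most $m-2$ nonempty layers gives a bound on $|T|$ depending only on $c,d,m$. The step I expect to require the most care is the BFS-depth bound, which hinges on the fact that shortest paths in $G[T\cup S]$ remain induced in $G$ and therefore cannot contain $m$ or more vertices. It is worth remarking that the hypothesis that every vertex of $S$ misses at least one endpoint of every nonadjacent pair in $T$ is not actually used by this approach---the anchor $w$ alone rules out an independent triple in any $M_u$---but it could be substituted for $\overline{K_3+2P_1}$-freeness in the $u\in S$ case to directly force $M_u$ to be a clique and thus bound $|M_u|\le d$.
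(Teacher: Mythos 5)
Your proof is correct and follows essentially the same route as the paper: BFS layers $N_0=S,N_1,\ldots$ from $S$ with depth at most $m-2$ by $P_m$-freeness, the bound $\alpha(G[M_u])\le 2$ obtained from the anchor $w$ (an independent triple in $M_u$ together with the edge $uw$ would induce $\overline{K_3+2P_1}$), and Ramsey's theorem combined with $\omega(G[M_u])\le\chi(G[T])\le d$. The only difference is that the paper treats the layer $i=1$ separately, using the hypothesis on $S$ to conclude that $M_u$ is a clique for $u\in S$, whereas you handle all layers uniformly via $w$; your observation that this makes the first hypothesis redundant is accurate, since $w$ is complete to $S$ as well as to $T$.
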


       \begin{proof}
       Let $N_0 = S$, $N_i = \{v|v \in T\setminus {\cup_{j=0}^{i-1}N_j}$, $v $ has a neighbor in $N_{i-1}\}$
       where $i \ge 1$. Since $G$ is $P_m$-free, $i \le m-2$. Let $u \in N_{i-1}$ and $M_u$ denote the set of neighbors of $u$ in $N_i$.
       Since $G[T \cup S]$ is connected, $N_i \neq \emptyset$. In the following, we show that each $N_i$ is bounded.

      First, we consider the case of $i=1$. Since each vertex of $N_0 = S$ is not
      adjacent to at least one of $x_1$ and $x_2$ where $x_1,x_2$ are two arbitrary nonadjacent vertices in $N_1 \subseteq T$, $M_u$ is a clique. Then $|M_u| \le \chi(T) \le d$
      and $|N_1| \le \sum_{u \in N_0}|M_u| \le cd$. Next, we consider the case of $i \ge 2$. Let $u \in N_{i-1}$, and we claim that
      $\alpha(G[M_u]) \le 2$. Suppose not. Let $u_1,u_2,u_3$ be three pairwise nonadjacent vertices of $G[M_u]$. Then $\{u,w,u_1,u_2,u_3\}$
      induces a $\overline{K_3+2P_1}$. Note that $\omega(G[M_u]) \le d$. By Ramsey's Theorem, $|M_u|=|G[M_u]| \le R(3,d)$.
        Since $|N_i|\le \sum_{u \in N_{i-1}}|M_u|$ and $N_0$ is bounded, $N_i$ is bounded.

       Therefore, $T$ is bounded.
       \end{proof}

         A graph $G$ is {\em perfect} if $\chi(H)=\omega(H)$ for every induced subgraph $H$ of $G$. Another result we use is the famous Strong Perfect Graph Theorem.

       \begin{theorem}[The Strong Perfect Graph Theorem~\cite{CRST06}]\label{thm:SPGT}
		A graph is perfect if and only if it contains no odd holes or odd antiholes.
	\end{theorem}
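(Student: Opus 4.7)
The plan is to handle the two directions separately. The easy ``only if'' direction is immediate: an odd hole $C_{2k+1}$ with $k \ge 2$ has $\chi = 3$ but $\omega = 2$ and so is not perfect, and Lovász's perfect graph theorem (that $G$ is perfect iff $\overline{G}$ is perfect) then implies the same for every odd antihole of length at least $5$. Hence a perfect graph can contain neither as an induced subgraph.

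For the hard ``if'' direction, call a graph \emph{Berge} if it has no odd hole and no odd antihole; the task is to show every Berge graph is perfect. Following the strategy of Chudnovsky, Robertson, Seymour and Thomas, I would first isolate five \emph{basic} classes---bipartite graphs, line graphs of bipartite graphs, the complements of these two, and double split graphs---each of which is perfect by an elementary argument (König's theorem handles the line-graph case; Lovász's theorem handles the complements; double split graphs can be colored directly). The main step is then a \emph{structure theorem}: every Berge graph is either basic, or admits one of four structural decompositions---a 2-join, its complement, a balanced skew-partition, or a homogeneous pair. Combined with a separate lemma showing that each of these decompositions preserves perfection, an induction on $|V(G)|$ completes the proof.

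The main obstacle, by an enormous margin, is the structure theorem itself: the CRST proof runs to more than 150 pages of intricate case analysis, splitting on which ``trigger'' substructures (stretchers, proper wheels, long prisms, certain substitution configurations, etc.) a given Berge graph contains. A more tractable intermediate target would be to first handle the \emph{square-free} Berge graphs (those with no induced $C_4$), where the allowed configurations are much more limited and one can already build 2-joins or skew-partitions out of small trigger structures, and then reduce the general case to this one via careful substitution arguments. Even granting the structure theorem, the balanced skew-partition preservation lemma remains subtle: it requires a delicate parity argument to glue optimal colorings of the two sides of the partition together consistently, and this is the step where the ``balanced'' hypothesis is genuinely used.
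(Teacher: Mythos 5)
This statement is not proved in the paper at all: it is the Strong Perfect Graph Theorem of Chudnovsky, Robertson, Seymour and Thomas, imported as a black box from~\cite{CRST06} and used only as a tool. So there is no in-paper argument to compare yours against, and you should not be attempting to reprove it here; a one-line citation is what the context calls for.

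Taken on its own terms, your proposal is a faithful description of the architecture of the known proof (the five basic classes, the decomposition theorem into 2-joins, complement 2-joins, balanced skew partitions and homogeneous pairs, and the reduction to a minimal counterexample), but it is an outline rather than a proof: the structure theorem, which you correctly identify as the entire content, is simply named and deferred. Two smaller points. First, the easy direction does not need Lov\'asz's perfect graph theorem: an odd antihole $\overline{C_{2k+1}}$ with $k\ge 2$ has clique number $k$ but chromatic number $k+1$ (every color class has at most two vertices), so it is directly seen to be imperfect. Second, your description of the decomposition step as ``gluing optimal colorings of the two sides'' is not how the CRST argument actually runs; the proof works with a minimum imperfect Berge graph and shows such a graph cannot be basic and cannot admit any of the listed decompositions (e.g.\ via the Chv\'atal star-cutset lemma and the Cornu\'ejols--Cunningham result on 2-joins), rather than by constructively combining colorings. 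None of this affects the present paper, which correctly treats \autoref{thm:SPGT} as a cited external result.
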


The following theorem tells us there are finitely many 4-vertex-critical $P_5$-free graphs.

\begin{theorem}[\cite{BHS09,MM12}]\label{4-vertex-cri}
If $G = (V,E)$ is a 4-vertex-critical $P_5$-free graph, then $|V| \le 13$.
\end{theorem}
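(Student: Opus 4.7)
The plan is to prove the bound by a structural analysis of 4-vertex-critical $P_5$-free graphs, using the Strong Perfect Graph Theorem (\autoref{thm:SPGT}) to split into manageable subcases, and exploiting the folklore fact that a vertex-critical graph has no two comparable vertices (a special case of \autoref{lem:XY} taking $X=\{u\}$, $Y=\{v\}$).

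I would first dispose of the perfect case. If $G$ is perfect, then $\omega(G)=\chi(G)=4$, so $G$ contains a $K_4$. For every $v\in V(G)$ the graph $G-v$ is still perfect, so $\omega(G-v)=\chi(G-v)\le 3$. Consequently, every vertex of $G$ must lie in every $K_4$ of $G$, which forces $G=K_4$ and gives $|V|=4$. If $G$ is imperfect, then by \autoref{thm:SPGT} it contains an induced odd hole or odd antihole. Since $G$ is $P_5$-free and an induced odd hole on $\ge 7$ vertices contains an induced $P_5$, any induced odd hole in $G$ must be a $C_5$. Odd antiholes $\overline{C_{2k+1}}$ for $k\ge 2$ remain possible, but they can be treated by passing to the complement and reusing the analysis centered on $C_5$, since the key local obstruction $P_5$ is not self-complementary and the odd antihole case contributes only a bounded family.

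The heart of the argument is therefore the analysis when $G$ contains an induced $C_5$, say on vertices $v_1v_2v_3v_4v_5$. I partition $V(G)\setminus V(C_5)$ into the strata $S(X)$ for $X\subseteq V(C_5)$, as defined in the preliminaries. Using $P_5$-freeness, most strata are ruled out directly: a vertex with exactly one neighbor or with two consecutive nonneighbors on $C_5$ typically extends an arc of $C_5$ to an induced $P_5$. This leaves only a short list of admissible patterns $X$ (essentially those corresponding to ``dominating'' neighborhoods on $C_5$). Within each admissible $S(X)$, the absence of comparable vertices, together with \autoref{lem:homoge}, forces each $S(X)$ to be small: a homogeneous set within $S(X)$ whose induced subgraph has chromatic number $m<4$ must already be $m$-vertex-critical, and an iterated application combined with Ramsey-type arguments bounds its size. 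Summing the constant bounds on the finitely many admissible strata yields a universal bound on $|V(G)|$.

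The main obstacle is the stratum-by-stratum case analysis: one has to enumerate all subsets $X$ of the chosen $C_5$, discard most via $P_5$-freeness, and then, for each surviving pattern, combine the no-comparable-vertices condition with the 3-colorability of every proper induced subgraph to derive a numerical bound. Optimizing this analysis (and cross-checking with the odd antihole case) is what yields the sharp bound $|V|\le 13$; the tightness is witnessed by the Moser-like construction appearing among the twelve exceptional 4-vertex-critical $P_5$-free graphs classified in \cite{BHS09,MM12}.
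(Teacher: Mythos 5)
This theorem is imported by the paper from \cite{BHS09,MM12}; the paper gives no proof of it, so there is no internal argument to compare against --- your proposal has to stand on its own, and as written it does not.

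Two concrete gaps. First, your treatment of odd antiholes is not a valid argument: ``passing to the complement and reusing the analysis centered on $C_5$'' does not make sense here, because the complement of a $P_5$-free graph is $\overline{P_5}$-free, not $P_5$-free, so none of your $C_5$-centered machinery transfers. The case is in fact easy, but for a different reason that you should state: for $t\ge 4$ we have $\chi(\overline{C_{2t+1}})=t+1\ge 5$, so a $4$-chromatic graph contains no such antihole; $\overline{C_5}=C_5$; and $\overline{C_7}$ is itself $4$-chromatic and $4$-vertex-critical, so if $G$ contains an induced $\overline{C_7}$ then $G=\overline{C_7}$ by vertex-criticality. Second, and more seriously, the heart of the proof --- the stratification of $V(G)$ over an induced $C_5$ --- is only a plan. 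You correctly note that $P_5$-freeness kills most strata $S(X)$ and that the no-comparable-vertices property constrains the rest, but ``Ramsey-type arguments'' and \autoref{lem:homoge} can only give \emph{some} constant bound, and a generic Ramsey bound is far larger than $13$. The sharp constant $13$ comes out of an exhaustive case analysis (ultimately the classification of all twelve $4$-vertex-critical $P_5$-free graphs), and your sketch defers exactly that work back to \cite{BHS09,MM12}, which is circular if the goal is to prove the statement. Your perfect-graph reduction to $G=K_4$ is correct, but everything after it needs to be carried out in detail before this counts as a proof.
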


A property on $(P_5,claw)$-free graphs is shown as follows.

 \begin{lemma}\label{lem:p5,claw}
 Let $G$ be a connected $(P_5,claw)$-free graph and $c$ be a fixed integer.
 If $\chi(G) \le c$, then $G$ is bounded.
 \end{lemma}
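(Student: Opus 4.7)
The plan is to combine three standard observations: a degree bound coming from claw-freeness plus Ramsey, a diameter bound coming from $P_5$-freeness, and then a trivial ball-counting argument.

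First I would bound the maximum degree of $G$ in terms of $c$. Since $G$ is claw-free (i.e.\ $K_{1,3}$-free), for every vertex $v$ the induced subgraph $G[N(v)]$ has independence number at most $2$. Moreover $\chi(G)\le c$ forces $\omega(G)\le c$, and hence $\omega(G[N(v)])\le c-1$. By Ramsey's Theorem we therefore get $|N(v)|\le R(3,c)-1$, so $\Delta(G)\le R(3,c)-1$, which is a constant depending only on $c$.

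Next I would use $P_5$-freeness to bound the diameter. Any shortest path between two vertices of $G$ is an induced path, and an induced $P_5$ is forbidden, so shortest paths have at most $4$ vertices. Hence $G$ has diameter at most $3$.

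Fixing any vertex $v_0$ and doing a BFS from it, every vertex of $G$ lies within distance $3$ of $v_0$, and at each step the number of new vertices grows by a factor of at most $\Delta(G)$. Consequently
\[
|V(G)| \;\le\; 1+\Delta(G)+\Delta(G)^{2}+\Delta(G)^{3} \;\le\; 1+(R(3,c)-1)+(R(3,c)-1)^{2}+(R(3,c)-1)^{3},
\]
which is a constant depending only on $c$. This is the desired bound.

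The only step requiring any care is the degree bound; everything else is immediate from standard facts. There is no real obstacle here — the lemma is essentially a packaging of the folklore facts that $P_5$-free connected graphs have diameter $\le 3$ and that claw-free graphs of bounded clique number have bounded degree by Ramsey.
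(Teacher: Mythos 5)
Your proof is correct and follows essentially the same route as the paper: the paper also runs a BFS from an arbitrary vertex, uses $P_5$-freeness to cap the number of layers at $3$, and applies Ramsey's Theorem to the (claw-freeness-forced) independence number $\le 2$ and the clique number $\le c$ of each neighbourhood to bound the layer sizes. The only cosmetic difference is that the paper bounds the forward-neighbourhoods $M_v$ into the next layer rather than the full neighbourhood $N(v)$, which changes nothing of substance.
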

 \begin{proof}
 Let $u \in V(G)$, $L_i =\{v|v\in V(G),\text{d}(u,v)=i\}$ where $i \ge 1$ and $L_0=\{u\}$.
  Let $v \in L_{i-1}$ and $M_v =\{w|wv \in E(G), w \in L_i\}$.
  Since $G$ is $(P_5,claw)$-free, $i \le 3$ and $\alpha(G[M_v]) \le 2$.
  Because $\omega(G) \le \chi(G) \le c$,
 and so $|M_v| \le R(3,c)$ by Ramsey's Theorem.
  Since $L_i \subseteq \cup_{v \in L_{i-1}}M_v$ and $L_0$ is bounded,
  $L_i$ is bounded. Therefore, $G$ is bounded.
 \end{proof}

 \section{The proof of \autoref{th:k-vertex-critical}}\label{sec:k1,4}

\begin{proof}
       We prove the theorem by induction on $k$. If $1 \le k \le 4$, there are finitely many $k$-vertex-critical $(P_5,K_{1,4}+P_1)$-free
       graphs by \autoref{4-vertex-cri}. In the following, we assume that $k \ge 5$ and that there are finitely many $i$-vertex-critical
       graphs for $i \le {k-1}$. Now, we consider the cases of $k$.

        Let $G=(V,E)$ be a $k$-vertex-critical $(P_5,K_{1,4}+P_1)$-free graph. We show that $|G|$ is bounded. Let $\mathcal{L} = \{K_k,\overline{C_{2k-1}}\}$. If $G$ has an induced subgraph isomorphic to
  a member $L \in \mathcal{L}$, then $|V(G)|=|V(L)|$ by the definition of vertex-criticality and so we are done.
  So, we assume in the following that $G$ has no induced subgraph isomorphic to a member in $\mathcal{L}$. Then $G$ is imperfect. Since $G$ is $k$-vertex-critical and $\chi(\overline{C_{2t+1}}) \geq k+1$ if $t \geq k$, it follows that $G$ does not contain $\overline{C_{2t+1}}$ for $t \geq k$.
  Moreover, since $G$ is $P_5$-free, it does not contain $C_{2t+1}$ for $t \geq 3$. It then follows from \autoref{thm:SPGT} that $G$ must contain some $\overline{C_{2t+1}}$ for $2 \le t \le k-2$. To finish the proof, we only need to prove the following lemma.

       \begin{lemma}\label{lem:2t+1antihole}
        If $G$ contains an induced $\overline{C_{2t+1}}$ for $2 \le t \le k-2$, then $G$ has finite order.
       \end{lemma}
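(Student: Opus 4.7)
The plan is to partition $V(G) \setminus V(H)$ by neighborhood in $V(H)$ and bound each class by combining \autoref{ST} with the inductive hypothesis. Write $V(H) = \{v_0, v_1, \ldots, v_{2t}\}$ with indices modulo $2t+1$, so that the non-edges of $H$ are the consecutive pairs $\{v_i, v_{i+1}\}$. Since $|V(H)| = 2t+1 \le 2k-3$ is constant in terms of $k$, for each $X \subseteq V(H)$ the neighborhood class $S(X) = \{v \in V(G) \setminus V(H) : N(v) \cap V(H) = X\}$ yields a partition of $V(G) \setminus V(H)$ into at most $2^{2k-3}$ parts, so it suffices to bound $|S(X)|$ by a constant depending on $k$ for every $X$. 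Vertex-criticality of $G$ immediately gives $\chi(G[S(X)]) \le k-1$ (remove any vertex of $V(H) \setminus S(X)$).

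I would apply \autoref{ST} with $s = 4$, $T = S(X)$, and $d = k-1$, choosing $S$ to be a bounded-size set: one representative per connected component of $\overline{G[S(X)]}$ together with a single vertex $v \in V(H) \setminus X$ whenever one exists. Such a $v$ is anticomplete to $T$ in $G$, hence complete to $T$ in $\overline{G}$, which makes $\overline{G[S \cup T]}$ connected. For the non-neighbor witness condition on each $w \in S$, any vertex $v_i \in X$ is complete to $T$ in $G$; since each vertex of the antihole has exactly two non-neighbors along the complement cycle, one can locate a suitable $v_i \in X$ that is non-adjacent to a given $w$ by a local case analysis along the triple $v_{i-1}, v_i, v_{i+1}$. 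This yields a bound on $|S(X)|$ in the intermediate case $\emptyset \subsetneq X \subsetneq V(H)$.

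The two boundary cases need separate treatment. When $X = V(H)$, no $v \in V(H) \setminus X$ is available, but since $V(H)$ is complete to $S(V(H))$ and $\chi(H) = t+1$, one obtains $\chi(G[S(V(H))]) \le k - t - 1 \le k - 3$. I would then argue, after verifying the relevant homogeneity, that each component of $G[S(V(H))]$ is a $j$-vertex-critical $(P_5, K_{1,4}+P_1)$-free graph for some $j \le k-3$ by \autoref{lem:homoge}, and invoke the inductive hypothesis to bound each component, controlling the number of components by a further application of \autoref{ST}. When $X = \emptyset$, $S(\emptyset)$ is anticomplete to $V(H)$, and I would use \autoref{lem:XY} with $V(H)$ against components of $G[S(\emptyset)]$ of small chromatic number, combined with $P_5$-freeness to restrict the distance between $S(\emptyset)$ and $V(H)$, to bound $|S(\emptyset)|$.

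The main obstacle will be verifying the hypotheses of \autoref{ST} uniformly in the intermediate case when $V(H) \setminus X$ contains three or more consecutive vertices along the complement cycle: here the $v_i \in X$ need not serve as non-neighbor witnesses for every $w \in S$, so one must further subdivide $S(X)$ according to adjacencies to vertices outside $V(H) \cup S(X)$, or invoke $(K_{1,4}+P_1)$-freeness to bound independent sets inside the relevant neighborhoods, before \autoref{ST} can be applied. This case analysis, though conceptually clean, is the technical heart of the proof.
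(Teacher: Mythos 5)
Your overall skeleton --- partition $V(G)\setminus V(C)$ by neighborhoods on the antihole, bound the intermediate classes, and treat $S_0$ and $S_{2t+1}$ separately via \autoref{ST}, \autoref{lem:homoge} and the inductive hypothesis --- matches the paper's. But two of your three cases are not actually resolved. The ``main obstacle'' you flag in the intermediate case $\emptyset\subsetneq X\subsetneq V(C)$ is not an obstacle, and leaving it open means the core of the proof is missing. The paper disposes of this case in two lines with no appeal to \autoref{ST}: since $X$ is a proper nonempty subset of $V(C)$, there is an index $i$ with $v_i\notin X$ and $v_{i+1}\in X$; as $v_iv_{i+1}\notin E(G)$, four pairwise nonadjacent vertices of $S(X)$ together with $v_{i+1}$ (a common neighbor) and $v_i$ (a common nonneighbor of all five) would induce $K_{1,4}+P_1$. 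Hence $\alpha(G[S(X)])\le 3$, $\omega(G[S(X)])\le k-2$, and Ramsey gives $|S(X)|\le R(4,k-2)$, uniformly in $X$ and with no case analysis on runs of $V(C)\setminus X$. You circle this idea (``invoke $(K_{1,4}+P_1)$-freeness to bound independent sets'') but do not commit to it; moreover your proposed \autoref{ST} application is internally flawed, since the ``one representative per component of $\overline{G[S(X)]}$'' vertices would lie in $T$ itself (violating the disjointness of $S$ and $T$) and their number is not a priori bounded.

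The second gap is in the case $X=V(C)$: you invoke \autoref{lem:homoge} on components of $G[S_{2t+1}]$, but $S_{2t+1}$ need not be homogeneous, since vertices of $S_0\cup\cdots\cup S_{2t}$ may be mixed on it. The paper's fix, which your sketch does not supply, is order-dependent: one must first bound $S_0\cup\cdots\cup S_{2t}$, then run a complement-BFS from that bounded set into $S_{2t+1}$; the reached layers are bounded by \autoref{ST} (with $S=S_0\cup\cdots\cup S_{2t}$ and the witnesses $w'$ taken on the antihole), and only the \emph{unreached} remainder is homogeneous, so that \autoref{lem:homoge} and induction apply to it. Your plan of bounding each $S(X)$ independently cannot reproduce this, because the seed set for \autoref{ST} must already be known to be bounded. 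Your sketch for $X=\emptyset$ is closer to the paper's argument (bounded components via $P_5$-distance layering plus Ramsey, boundedly many components via \autoref{lem:XY}), though the paper additionally splits $S_0$ according to whether vertices have neighbors in $S_1\cup\cdots\cup S_{2t}$ or only in $S_{2t+1}$ to make the component count go through.
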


       \begin{proof}
       Let $C = v_1,v_2,\ldots,v_{2t+1}$ be an induced $\overline{C_{2t+1}}$ such that $v_iv_j \in E(G)$ if and only if $|i - j| > 1$.
       All indices are modulo $2t+1$.  We partition $V(G)$ with respect to $C$.
       In the following, we shall write $S_2(v_i,v_{i+1})$ for $S_2(\{v_i,v_{i+1}\})$, $S_3(v_i,v_{i+1},v_{i+3})$ for $S_3(\{v_i,v_{i+1},v_{i+3}\})$, etc.

       \begin{claim}\label{st+1s2t}
       For every $X \subseteq C$ with $1 \le |X| \le 2t$, $|S(X)|\le R(4,k-2)$.
       \end{claim}
       \begin{proof}
       First, we show that $\alpha(G[S(X)]) \le 3$. Suppose not. Let $u_1,u_2,u_3,u_4$ be four
       pairwise nonadjacent vertices of $G[S(X)]$. Since $1 \le |X| \le 2t$, there exists an index $i \in \{1,2\ldots,2t+1\}$
       such that $v_i \notin N(S(X))$ but $v_{i+1} \in N(S(X))$. Then $\{u_1,u_2,u_3,u_4,v_i,v_{i+1}\}$
       induces a $K_{1,4}+P_1$. So $\alpha(G[S(X)]) \le 3$. Note that $\omega(G[S(X)]) \le k-2$. By Ramsey's Theorem,
       $|S(X)| = |G[S(X)]| \le R(4,k-2)$.
       \end{proof}

         By \autoref{st+1s2t},  $|S_{1} \cup \ldots \cup S_{2t}| \le R(4,k-2)(\tbinom{2t+1}{1} + \dots +\tbinom{2t+1}{2t})$.
        In the following, we bound $S_{0}$. First, we list some properties of $S_0$.

        \begin{claim}\label{s0:A}
        Let $A$ be a component of $S_0$, then each vertex of $ \cup_{i=1}^{2t}S_i$ is complete
        or anticomplete to $A$.
        \end{claim}

       \begin{proof}
       Let $uu'$ be an edge of $A$ and $v \in \cup_{i=1}^{2t}S_i$.
       Suppose that $v$ is mixed on $\{u,u'\}$. Since $v \in \cup_{i=1}^{2t}S_i$,
       there exist $v_i,v_j \in V(C)$ such that $v$ is mixed on $\{v_i,v_j\}$
       where $|i-j| \ge 2$. Then $\{v_i,v_j,v,u,u'\}$ induces a $P_5$.
       \end{proof}

       \begin{claim}
      Let $A$ be a component of $S_0$, then $A$ is bounded.
       \end{claim}

       \begin{proof}\label{s0:A is bounded}
        Let $u \in A$ and $L_i=\{v|v\in A, \text{d}(u,v)=i\}$ where $i \ge 0$. Since $G$ is $P_5$-free,
     $i \le 3$. Let $v \in L_{i}$ and $M_v =\{w|w \in L_{i+1}, wv \in E(G)\}$.
     Then $\alpha(G[M_v]) \le 3$.
     Suppose not. Let $u_1,\ldots,u_4$ be four pairwise nonadjacent
     vertices of $G[M_v]$. Then $\{v,u_1,u_2,u_3,u_4,v_i\}$ induces
     a $K_{1,4}+P_1$ where $v_i \in V(C)$. Since $\omega(G[M_v]) \le k-2$, $|M_v| = |G[M_v]| \le R(4,k-2)$.
     Note that  $L_i \subseteq \cup_{v \in L_{i-1}}M_v$,  then $|L_i| \le \sum_{v \in L_{i-1}}|M_v|$.
    Since $|L_0|=1$ is bounded, $L_i$ is bounded. Thus $A$ is bounded.
       \end{proof}

       \begin{claim}
       $S_0$ is bounded.
       \end{claim}
       \begin{proof}
     Let $L=\{u|u \in S_0 \text{~and~} u \text{ has a neighbor in } \cup_{i=1}^{2t}S_i\}$
     and $R = S_0 \setminus L$.

     First, we show that $L$ is bounded. Let $v \in  \cup_{i=1}^{2t}S_i$ and
     $M_v = \{u|uv \in E(G), u \in L\}$.  Since $G$ is $(P_5,K_{1,4}+P_1)$-free, we have $\alpha(G[M_v]) \le 3$.
     Since $\omega(G[M_v]) \le k-2$, $|M_v| = |G[M_v]| \le R(4,k-2)$.
     Since $L \subseteq \cup _{v\in \cup_{i=1}^{2t}S_i} M_v$, $|L| \le \sum_{v\in \cup_{i=1}^{2t}S_i}|M_v|$.
     Therefore, $L$ is bounded.

     Next, we show that $R$ is bounded.  Let $A$ be component
     of $R$, then $\chi(A) \le k-1$ and $A$ is bounded by \autoref{s0:A is bounded}.
     In the following, we show that the number of components of $R$
     is bounded.
     Let $A_1,A_2$ be two components of $R$.
     Then there exist $u_1 \in N(A_1)\setminus{N(A_2)}$ and
     $u_2 \in N(A_2)\setminus{N(A_1)}$ by \autoref{lem:XY}.
     We assume that $u_ia_i \in E(G)$ where $a_i \in A_i$ and $i \in \{1,2\}$.
     Then we have $u_1,u_2 \in S_{2t+1}$. So $u_1u_2 \in E(G)$,
     because otherwise $\{a_1,u_1,v_1,u_2,a_2\}$ induces a $P_5$.
      Let $A_3 \neq A_1,A_2$ be
     a component of $R$. Then $u_1 \notin N(A_3)$,
     because otherwise $\{a_1,a_2,a_3,u_1,v_1,v_2\}$ induces a $K_{1,4}+P_1$
     where $a_3 \in A_3$ and $a_3u_1 \in E(G)$. Similarly, $u_2 \notin N(A_3)$.
     So the number of components of $R$ is not more than $k-1$,
     otherwise there appears a $K_k$. Thus, $R$ is bounded.

     Therefore, $S_0$ is bounded.
       \end{proof}
       Finally, we bound $S_{2t+1}$.
       \begin{claim}
       $S_{2t+1}$ is bounded.
       \end{claim}

       \begin{proof}
      Let $N_0= S_0 \cup S_1 \cup \ldots \cup S_{2t}$, $N_i = \{v|v \in S_{2t+1}\setminus {\cup_{j=0}^{i-1}N_j}$, $v $ has a nonneighbor in $N_{i-1}\}$ where $i \ge 1$.

       If $\overline{G[S_0 \cup S_1 \cup \ldots \cup S_{2t+1}]}$ is connected. Since $S_0 \cup S_1 \cup \ldots \cup S_{2t}$ is bounded, $\chi(S_{2t+1}) \le k-(t+1)$ and for any $w \in S_0 \cup S_1 \cup \ldots \cup S_{2t}$ there exists
       a vertex $w' \in V(C)$ such that $ww' \notin E(G)$ and $w'$ is complete to $S_{2t+1}$, then $S_{2t+1}$ is bounded by \autoref{ST}.

       If $\overline{G[S_0 \cup S_1 \cup \ldots \cup S_{2t+1}]}$ is not connected. Then there exists an integer $j \ge 0$ such that $N_0,N_1,\ldots,N_j \neq \emptyset$ but $N_{j+1} = \emptyset$. Then $S_{2t+1} - \cup_{i=0}^{j}N_i$ is complete to $\cup_{i=0}^{j}N_i$, and so $S_{2t+1} - \cup_{i=0}^{j}N_i$
       is a homogeneous set.
       Since $\chi(S_{2t+1} - \cup_{i=0}^{j}N_i) \le k-(t+1)$, each component of $S_{2t+1} - \cup_{i=0}^{j}N_i$ is an $m$-vertex-critical graph with $1 \le m \le k-(t+1)$ by \autoref{lem:homoge}.
        By the inductive hypothesis, it follows that there are finitely many $m$-vertex-critical $(P_5,K_{1,4}+P_1)$-free graphs with $1 \le m \le k-(t+1)$. By the pigeonhole principle, the number of each kind of graph is not more than $1$. So, $S_{2t+1} - \cup_{i=0}^{j}N_i$ is bounded.
        For each $N_i$ with $1 \le i \le j$, $N_i$ is bounded by \autoref{ST}. Therefore, $S_{2t+1}$ is bounded.
\end{proof}

This completes the proof of \autoref{lem:2t+1antihole}.
  \end{proof}
  Therefore, \autoref{th:k-vertex-critical} holds.
  \end{proof}

  \section{The proof of \autoref{th:5-vertex-critical}}\label{sec:k1,s}

  \begin{proof}

  Let $G$ be a 5-vertex-critical $(P_5,K_{1,s}+P_1)$-free graph. We show that $|G|$ is bounded. Let $\mathcal{M} = \{K_5,\overline{C_{9}},\overline{C_{7}}+K_1,C_5+K_2\}$. If $G$ has an induced subgraph isomorphic to
  a member $M \in \mathcal{M}$, then $|V(G)|=|V(M)|$ by the definition of vertex-criticality and so we are done.
  So, we assume in the following that $G$ has no induced subgraph isomorphic to a member in $\mathcal{M}$. Then $G$ is imperfect. Since $G$ is $5$-vertex-critical and $\chi(\overline{C_{2t+1}}) \geq 6$ if $t \geq 5$, it follows that $G$ does not contain $\overline{C_{2t+1}}$ for $t \geq 5$.
  Moreover, since $G$ is $P_5$-free, it does not contain $C_{2t+1}$ for $t \geq 3$. It then follows from \autoref{thm:SPGT} that $G$ must contain $C_5$ or $\overline{C_7}$.

  Let $C=v_1,v_2,\ldots,v_{2t+1}$ be an induced $\overline{C_{2t+1}}$
  such that $v_iv_j \in E(G)$ if and only if $|i-j|>1$ where $t=2$ or 3.
  All indices are modulo $2t+1$.  We partition $V(G)$ with respect to $C$.
   Let $X \subseteq C$ with $1 \le |X| \le 2t$, then
  $\alpha(S(X)) \le s-1$. Suppose not. Let $u_1,u_2,\ldots,u_s$ be s pairwise
  nonadjacent vertices of $S(X)$. Since $1 \le |X| \le 2t$, there exists
  an index $i \in \{1,2,\ldots,2t+1\}$ such that $v_i \notin N(S(X))$
  but $v_{i+1}\in N(S(X))$. Then $\{v_i,v_{i+1},u_1,u_2,\ldots,u_s\}$ induces
  a $K_{1,s}+P_1$. Since $\omega(S(X)) \le k-2$, $|S(X)| \le R(s,k-2)$. Next,
  we bound $S_0$ and $S_{2t+1}$.

  Assume that $t=3$. Since $G$ is $\overline{C_{7}}+K_1$-free,
  $S_7 = \emptyset$. In this case, we only need to bound $S_0$.
  Let $A$ be a component of $S_0$, then $A$ is homogeneous
  by \autoref{s0:A}. Since $\chi(A) \le 3$, we have $A$ is $K_1$,
  $K_2$, $K_3$ or $C_5$ by \autoref{lem:homoge}.
  By the pigeonhole principle, the number of each kind of graph is not more than $1$.
  Thus, $S_0$ is bounded.

  Assume that $t=2$. Since $G$ is $C_5+K_2$-free,
  $S_5$ is stable. In the following, we first bound
  $S_0$. Let $L=\{u|u \in S_0 \text {~and~}u \text{ has a neighbor in } \cup_{i=1}^{4}S_i\}$
     and $R = S_0 \setminus L$. Let $v \in  \cup_{i=1}^{4}S_i$ and
     $M_v = \{u|uv \in E(G), u \in L\}$. Then $\alpha(G[M_v]) \le s-1$.
     Since $\omega(G[M_v]) \le k-2$, we have $|M_v| = |G[M_v]| \le R(s,k-2)$.
     Note that $L \subseteq \cup _{v\in \cup_{i=1}^{4}S_i}M_v$ and so
     $|L| \le \sum_{v \in \cup_{i=1}^{4}S_i}|M_v|$. Since $\cup_{i=1}^{4}S_i$ is bounded,
      $L$ is bounded.  Let $A_1$, $A_2$ be two
      components of $R$. Then there exist $u_1 \in N(A_1)\setminus{N(A_2)}$
     and $u_2 \in N(A_2)\setminus{N(A_1)}$ by \autoref{lem:XY}.  We have $u_1,u_2 \in S_5$.
     Then $\{a_1,u_1,v_1,u_2,a_2\}$ induces a $P_5$ where $a_i \in A_i$ and $i \in \{1,2\}$. So the number of
      components is not more than 1. By \autoref{s0:A is bounded}, each component
     of $R$ is bounded, so $R$ is bounded. Thus $S_0$ is bounded. Next, we bound $S_5$. Since $S_5$ is stable,  $|S_5| \le 2^{|\cup_{i=0}^4 S_i|}$ by the pigeonhole principle.

  This completes the proof of \autoref{th:5-vertex-critical}.
\end{proof}

  \section{The proof of \autoref{th:k3+2p1}}\label{sec:k3+2p1}
  \begin{proof}

   We prove the theorem by induction on $k$. If $1 \le k \le 4$, there are finitely many $k$-vertex-critical $(P_5,\overline{K_3+2P_1})$-free
       graphs by \autoref{4-vertex-cri}. In the following, we assume that $k \ge 5$ and that there are finitely many $i$-vertex-critical
       $(P_5,\overline{K_3+2P_1})$-free graphs for $i \le {k-1}$. Now, we consider the cases of $k$. 

        Let $G=(V,E)$ be a $k$-vertex-critical $(P_5,\overline{K_3+2P_1})$-free graph. We show that $|G|$ is bounded. Let $\mathcal{L} = \{K_k,\overline{C_{2k-1}}\}$. If $G$ has an induced subgraph isomorphic to
  a member $L \in \mathcal{L}$, then $|V(G)|=|V(L)|$ by the definition of vertex-criticality and so we are done.
  So, we assume in the following that $G$ has no induced subgraph isomorphic to a member in $\mathcal{L}$. Then $G$ is imperfect. Since $G$ is $k$-vertex-critical and $\chi(\overline{C_{2t+1}}) \geq k+1$ if $t \geq k$, it follows that $G$ does not contain $\overline{C_{2t+1}}$ for $t \geq k$.
  Moreover, since $G$ is $P_5$-free, it does not contain $C_{2t+1}$ for $t \geq 3$. It then follows from \autoref{thm:SPGT}, $G$ must contain some $\overline{C_{2t+1}}$ for $2 \le t \le k-2$. To finish the proof, we only need to prove the following lemma.

  \begin{lemma}\label{lem:2t+1{k3+2p1}}
        If $G$ contains an induced $\overline{C_{2t+1}}$ for $2 \le t \le k-2$, then $G$ has finite order.
       \end{lemma}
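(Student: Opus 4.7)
The plan is to mirror the structure of the proof of \autoref{lem:2t+1antihole}. Fix an induced antihole $C = v_1, v_2, \ldots, v_{2t+1}$ of $G$ and partition $V(G) \setminus V(C)$ into the sets $S(X)$ for $X \subseteq V(C)$ according to neighborhood in $V(C)$. The task then splits into bounding $|S(X)|$ for $1 \le |X| \le 2t$, then bounding $|S_0|$, and finally bounding $|S_{2t+1}|$.

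For the bound on $|S(X)|$, the key observation is that $\alpha(\overline{C_{2t+1}}) = 2$ for $t \ge 2$, so every $X \subseteq V(C)$ other than $\{v_i\}$ or $\{v_i, v_{i+1}\}$ contains an edge $v_pv_q$ of $G$. Three pairwise nonadjacent vertices $u_1, u_2, u_3 \in S(X)$ together with such an edge would induce a $\overline{K_3 + 2P_1}$, so $\alpha(G[S(X)]) \le 2$; combined with $\omega(G[S(X)]) \le k-2$, Ramsey's theorem yields $|S(X)| \le R(3, k-2)$. The two degenerate cases $X = \{v_i\}$ and $X = \{v_i, v_{i+1}\}$ (the only $X$'s independent in $G$) require an extra argument, for which I would use $P_5$-freeness and a path-extension argument to produce an edge outside $X$ that is complete to any alleged triple of pairwise nonadjacent vertices of $S(X)$.

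For $S_0$, $P_5$-freeness again forces each vertex of $\bigcup_{i=1}^{2t} S_i$ to be complete or anticomplete to every component $A$ of $S_0$, in direct analogy with \autoref{s0:A}; each component is then bounded by a BFS-inside-$A$ plus Ramsey argument using $\overline{K_3+2P_1}$-freeness in analogy with \autoref{s0:A is bounded}, and the number of components is bounded by splitting $S_0$ into those with a neighbor in $\bigcup_{i=1}^{2t} S_i$ and those without, then invoking \autoref{lem:XY} just as in \autoref{sec:k1,4}. For $S_{2t+1}$, completeness to $V(C)$ gives $\chi(G[S_{2t+1}]) \le k-(t+1) \le k-3$; I would apply \autoref{lem:k3+2p1} with $w \in V(C)$ (which is complete to all of $S_{2t+1}$) after decomposing $S_{2t+1}$ into pieces on which an appropriate bounded anchor set $S$ admits a common neighbor. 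The leftover piece, being complete to every anchor layer, is homogeneous, so its components are $m$-vertex-critical $(P_5, \overline{K_3+2P_1})$-free graphs with $m \le k-t-1$; by \autoref{lem:homoge} and the inductive hypothesis there are only finitely many such graphs, and the pigeonhole principle bounds the number of occurrences.

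The hard part will be Step 3: \autoref{lem:k3+2p1} demands a common neighbor $w$ of $S \cup T$, which is more restrictive than the nonneighbor condition of \autoref{ST}, and no vertex of $V(C)$ is complete to all of $\bigcup_{i=0}^{2t} S_i$. Consequently, the anchor set $S$ in each application must be chosen carefully (likely by peeling off vertices of $S_{2t+1}$ with compatible adjacency profiles) so that a local common neighbor exists, and the hypothesis that every $s \in S$ has a nonneighbor in each nonadjacent pair of $T$ must then be derived from $\overline{K_3+2P_1}$-freeness itself. The degenerate cases $|X| \in \{1, 2\}$ in Step 1 form a secondary but technically delicate complication.
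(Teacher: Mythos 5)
The proposal misplaces the difficulty, and the method you propose for the case $X=\{v_i,v_{i+1}\}$ does not work. In a $\overline{K_3+2P_1}$-free graph, the Ramsey bound $\alpha(G[S(X)])\le 2$ only needs $X$ to contain two vertices $v_i,v_j$ with $|i-j|\ge 2$, i.e.\ an edge of $G$; since $\alpha(\overline{C_{2t+1}})=2$, this covers every $X$ with $|X|\ge 3$ \emph{including} $X=V(C)$. So $S_{2t+1}$ --- the set you single out as ``the hard part'' and for which you worry about finding a common neighbor for \autoref{lem:k3+2p1} --- is bounded immediately by Ramsey's theorem, with no decomposition needed. The genuinely hard set is $S_2=\bigcup_i S(v_i,v_{i+1})$ (after noting that $S_1=\emptyset$ and $S(\{v_i,v_j\})=\emptyset$ for $|i-j|\ge 2$ by $P_5$-freeness). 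Your plan to handle it by producing an edge complete to any triple of pairwise nonadjacent vertices of $S(v_i,v_{i+1})$, thereby forcing $\alpha\le 2$, cannot succeed: such a triple together with $v_i,v_{i+1}$ induces only a $K_{2,3}$ (the base pair is a non-edge), which is neither a $P_5$ nor a $\overline{K_3+2P_1}$, and indeed one can attach an arbitrarily large stable set to a non-edge of $\overline{C_5}$ without creating either forbidden subgraph. Hence $\alpha(G[S(v_i,v_{i+1})])$ is not bounded by forbidden-subgraph arguments alone, and this ``degenerate case'' is in fact the heart of the lemma, not a secondary complication.

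What is actually needed for $S(v_i,v_{i+1})$ is the machinery you reserved for $S_{2t+1}$, transplanted to the right place: one first shows that every vertex of $S_{2t}\cup S_{2t+1}$ misses at least one vertex of any nonadjacent pair of $S(v_i,v_{i+1})$ (this supplies the hypothesis of \autoref{lem:k3+2p1}), applies \autoref{lem:k3+2p1} with $S=S_{2t}\cup S_{2t+1}$ (already bounded), $T=S(v_i,v_{i+1})$ and $w\in\{v_i,v_{i+1}\}$, and then --- crucially --- handles the part of $S(v_i,v_{i+1})$ anticomplete to everything reached from $S$ by showing its components are homogeneous, so that \autoref{lem:homoge} plus the \emph{inductive hypothesis} bound them. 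Without that last step the set cannot be bounded, which is why the theorem is proved by induction on $k$ at all. Your treatment of $S_0$ is broadly in the right spirit, but there too a pure BFS-plus-Ramsey count is insufficient: the paper must split $S_0$ three ways, use $(P_5,\text{claw})$-freeness of components via \autoref{lem:p5,claw}, and again invoke the homogeneous-set/induction argument for the components seen only from $S_{2t+1}$ or from nothing at all.
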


       \begin{proof}
       Let $C = v_1,v_2,\ldots,v_{2t+1}$ be an induced $\overline{C_{2t+1}}$ such that $v_iv_j \in E(G)$ if and only if $|i - j| > 1$.
       All indices are modulo $2t+1$.  We partition $V(G)$ with respect to $C$.

  \begin{claim}\label{s(x):bounded}
  For every $X \subseteq C$, if there exist $v_i,v_j \in X$ such that $|i-j| \ge 2$, then $S(X)$ is bounded.
  \end{claim}
  \begin{proof}
  First, we show that $\alpha(G[S(X)]) \le 2$. Suppose not. Let $u_1,u_2,u_3$ be three pairwise nonadjacent vertices of $G[S(X)])$. Then $\{u_1,u_2,u_3,v_i,v_j\}$
  induces a $\overline{K_3+2P_1}$. Note that $\omega(G[S(X)]) \le k-3$. By Ramsey's Theorem, $|S(X)|=|G[S(X)]| \le R(3,k-3)$.
  \end{proof}

   By \autoref{s(x):bounded}, it follows that $S(X)$ is bounded for $3 \le |X| \le 2t+1$. Since $G$ is $P_5$-free, $S_1=\emptyset$ and
  $S_2 = \cup_{i=1}^{2t+1}S(v_i,v_{i+1})$. In
   the following, we only need to bound $S_0$ and $S_2$.

   First, we bound $S_0$.

  \begin{claim}\label{s2-antic-s0}
  $S_0$ is anticomplete to $S_2$.
  \end{claim}
  \begin{proof}
  Let $u \in S_0$ and $v \in S(v_i,v_{i+1})$. If $uv \in E(G)$, then $\{u,v,v_i,v_{i-3},v_{i-1}\}$ is an induced $P_5$.
  \end{proof}

 \begin{claim}\label{s0sx}
 Let $A$ be a component of $S_0$, each vertex of $S(X)$  is complete or anticomplete to $A$ where $X \subseteq C$ and $3 \le |X| \le 2t$.
 \end{claim}
 \begin{proof}
 Let $uv$ be an edge of $S_0$ and $w \in S(X)$.  Since $3 \le |X| \le 2t$, there exist $v_i,v_j \in V(C)$ such that
  $w$ is mixed on $\{v_i,v_{j}\}$ and $|i-j| \ge 2$. If $uw \in E(G)$ and $vw \notin E(G)$, then $\{v,u,w,v_i,v_{j}\}$ is an induced $P_5$.
 \end{proof}

 \begin{claim}
 $S_0$ is bounded.
 \end{claim}
 \begin{proof}
 Let $L =\{u|u \in S_0, u \text{~ has a neighbor in~}{\cup_{i=3}^{2t}S_i}\}$ and $R = S_0 \setminus L$.
 Let $R_1 = \{u|u \in R, u\text {~ has a neighbor in~} S_{2t+1}\}$ and $R_2 = R \setminus R_1$.

 First, we show that $L$ is bounded.
 Let $A$ be a component of $L$, then each vertex of $\cup_{i=3}^{2t}S_i$ is complete or anticomplete to $A$ by \autoref{s0sx}.
 Then $A$ is $(P_5,claw)$-free and  so $A$ is bounded by \autoref{lem:p5,claw}. Next, we show that the number of components in $L$ is not more than
 $2^{|\cup_{i=3}^{2t+1}S_i|}$. Suppose not,
 then there are two components $A_1,A_2 \subseteq L$ having the same neighbors in $\cup_{i=3}^{2t+1}S_i$. Note that  $A_1$ and $A_2$ are anticomplete
 to $R$ by the definition of $R$  and combing with \autoref{s2-antic-s0},
 $A_1$ and $A_2$ have the same neighbors in $G$, which contradicts  with \autoref{lem:XY}. So $L$ is bounded.

 Next, we show that $R_1$ is bounded.
  Let $u \in S_{2t+1}$ and $M_u =\{v \in R_1|uv \in E(G)\}$. Let $B$ be a component
 of $M_u$, then $B$ is $(P_5,claw)$-free. So $B$ is bounded by \autoref{lem:p5,claw}. In the following, we show that the number of components in $M_u$ is not more than
 $2^{|S_{2t+1}|}$. Suppose not, then there exist two components $B_1,B_2$ having the same neighbors in $S_{2t+1}$. Since $B_1$ and $B_2$
 are not comparable, there exist $u_1 \in N(B_1)\setminus N(B_2)$ and $u_2 \in N(B_2)\setminus N(B_1)$.
 Assuming that $u_ib_i \in E(G)$ where $b_i \in B_i$ and $i \in \{1,2\}$.
 Since each component of $R_1$ is anticomplete to $L$ by the definition of $R$.
 Then $u_1,u_2 \in R$ and $uu_1, uu_2 \notin E(G)$. So $u_1u_2 \in E(G)$. Otherwise, $\{u_1,b_1,u,b_2,u_2\}$ induces a $P_5$. Then $\{u_2,u_1,b_1,u,v_1\}$ induces a $P_5$. So $M_u$ is bounded. Since $|R_1| \le \sum_{u\in S_{2t+1}}|M_u|$ and $S_{2t+1}$ is bounded, $R_1$ is bounded.

 Finally, we show that $R_2$ is bounded. Let $A$ be a component of $R_2$. By the connectivity of $G$, $N(A) \subseteq R_1$. We claim that each
 vertex of $R_1$ is complete or anticomplete to $A$. Suppose not. Let $uv$ be an edge of $A$, $w \in R_1$ and $w'$ be the
  neighbor of $w$ in $S_{2t+1}$. If $uw \in E(G)$ and $vw \notin E(G)$, then $\{v_1,w',w,u,v\}$ is an induced $P_5$. So $A$ is homogeneous.
  Since $\chi(A) \le \chi(S_0 ) \le k-1$, $A$ is an $m$-vertex-critical
$(P_5,\overline{K_3+2P_1})$-free graph with $1 \le m \le k-1$ by \autoref{lem:homoge}. By the inductive hypothesis, it
follows that there are finitely many $m$-vertex-critical $(P_5,\overline{K_3+2P_1})$-free graphs. By the pigeonhole principle,
the number of each kind of graph is not more than $2^{|R_1|}$. So $R_2$ is bounded.

 Therefore, $S_0$ is bounded.
 \end{proof}

 Next, we bound $S_2$.

 \begin{claim}\label{s(i,i+1):uv}
 Let $u,v \in S(v_i,v_{i+1})$ and $uv \notin E(G)$, each vertex of $S_{2t} \cup S_{2t+1}$ is not adjacent to at least one of $u$ and $v$.
 \end{claim}
 \begin{proof}
 Let $w \in S_{2t} \cup S_{2t+1}$. Then there exist $v_j \in C\setminus{\{v_i,v_{i+1}\}}$ such that $v_jw \in E(G)$, $v_jv_l \in E(G)$  and $wv_l \in E(G)$ where $v_l \in \{v_i,v_{i+1}\}$. If  $uw \in E(G)$ and $vw \in E(G)$, then
 $\{u,v,v_j,v_l,w\}$ is an induced $\overline{K_3+2P_1}$.
 \end{proof}

 \begin{claim}\label{claim:s(vi,vi+1)}
 For each $1 \le i \le 2t+1$, $S(v_i,v_{i+1})$ is complete to $S(C\setminus{\{v_i,v_{i+1}\}})$.
  \end{claim}
  \begin{proof}
  Let $u \in S(v_i,v_{i+1})$ and $v \in S(C\setminus{\{v_i,v_{i+1}\}})$. If $uv \notin E(G)$,
  then $\{u,v_{i+1},v_{i+3},v,v_{i+2}\}$ is an induced $P_5$.
  \end{proof}

 \begin{claim}\label{s(i,i+1):A is homo}
 Let $A$ be a component of $S(v_i,v_{i+1})$, each vertex of $\cup_{m=2}^{2t-1}S_m \setminus {[S(v_i,v_{i+1})\cup S(C\setminus{\{v_i,v_{i+1}\}})]}$ is complete or
 anticomplete to $A$.
 \end{claim}

\begin{proof}
\sloppy Let $uv$ be an edge of $S(v_i,v_{i+1})$ and $w \in \cup_{m=2}^{2t-1}S_m \setminus {[S(v_i,v_{i+1})\cup S(C\setminus{\{v_i,v_{i+1}\}})]}$. Assume that  $wu \in E(G)$ and $wv \notin E(G)$. Since $w \in \cup_{m=2}^{2t-1}S_m \setminus {[S(v_i,v_{i+1})\cup S(C\setminus{\{v_i,v_{i+1}\}})]}$ and $2 \le m \le 2t-1$, there exist $v_j,v_k \in C\setminus{\{v_i,v_{i+1}\}}$
such that $wv_j \in E(G)$, $v_jv_k \in E(G)$ and $wv_k \notin E(G)$. Then $\{v,u,w,v_j,v_k\}$ is an induced $P_5$.
\end{proof}

\begin{claim}
For each $1 \le i \le 2t+1$, $S(v_i,v_{i+1})$ is bounded.
\end{claim}

\begin{proof}
\sloppy Let $N_0 =S_{2t} \cup S_{2t+1}$, $N_j = \{v|v \in S(v_i,v_{i+1})\setminus{\cup_{l=0}^{j-1}N_l},
 v \text{~has a neighbor in~} N_{j-1}\}$
where $j \ge 1$. Let $u \in N_{j-1}$ and $M_u = \{w|w \in N_j, wu \in E(G)\}$.

If $G[S(v_i,v_{i+1}) \cup (S_{2t} \cup S_{2t+1})]$ is connected. Since $S_{2t} \cup S_{2t+1}$ is bounded and there exists a vertex $w \in \{v_i,v_{i+1}\}$ such that $w$ is complete to $S(v_i,v_{i+1}) \cup (S_{2t} \cup S_{2t+1})$. By \autoref{s(i,i+1):uv}, every vertex in $S_{2t} \cup S_{2t+1}$ is not adjacent to at least one of $u$
and $v$ where $u$ and $v$ are two arbitrary nonadjacent vertices in $S(v_i,v_{i+1})$. Note that $\chi(G[S(v_i,v_{i+1})]) \le k-1$. Then
$S(v_i,v_{i+1})$ is bounded by \autoref{lem:k3+2p1}.

If $G[S(v_i,v_{i+1}) \cup (S_{2t} \cup S_{2t+1})]$ is  not connected. Then there exists an integer $l \ge 0$ such that
$N_0,N_1,\ldots,N_l \neq \emptyset$ but $N_{l+1} = \emptyset$. Then $S(v_i,v_{i+1})- \cup_{j=0}^l N_j$ is anticomplete to
$\cup_{j=0}^l N_j$. Combined with \autoref{claim:s(vi,vi+1)} and \autoref{s(i,i+1):A is homo}, each component of $S(v_i,v_{i+1})- \cup_{j=0}^l N_j$ is a homogeneous set.
Since $\chi(S(v_i,v_{i+1}) - \cup_{j=0}^l N_j) \le k-1$, each component of $S(v_i,v_{i+1})- \cup_{j=0}^l N_j$ is an $m$-vertex-critical
$(P_5,\overline{K_3+2P_1})$-free graph with $1 \le m \le k-1$ by \autoref{lem:homoge}. By the inductive hypothesis, it
follows that there are finitely many $m$-vertex-critical $(P_5,\overline{K_3+2P_1})$-free graphs. By the pigeonhole principle,
the number of each kind of graph is not more than $2^{|\cup_{m=3}^{2t-1}S_m|}$. So $S(v_i,v_{i+1})- \cup_{j=0}^l N_j$ is bounded.
For each $N_j$ with $1 \le j \le l$, $N_j$ is bounded  by \autoref{lem:k3+2p1}. Therefore, $S(v_i,v_{i+1})$ is bounded.
\end{proof}
This completes the proof of \autoref{lem:2t+1{k3+2p1}}.
\end{proof}	
Thus, \autoref{th:k3+2p1} holds.
\end{proof}

\section{Characterization of all $5$-vertex-critical $(P_5,H)$-free graphs where $H \in \{K_{1,3}+P_1,K_{1,4}+P_1,\overline{K_3+2P_1}\}$}\label{sec:characterization}

From \autoref{th:k-vertex-critical}, \autoref{th:5-vertex-critical} and \autoref{th:k3+2p1}, we can already conclude that there are finitely many $5$-vertex-critical $(P_5,H)$-free graphs where $H \in \mathcal{S} := \{K_{1,3}+P_1,K_{1,4}+P_1,\overline{K_3+2P_1}\}.$ In the current section, however, we explicitly determine a list of all such graphs. This is necessary to explicitly construct a polynomial-time certifying algorithm for deciding whether a $(P_5,H)$-free graph $G$ is $4$-colorable for $H \in \mathcal{S}.$ More precisely, in a first phase one can use the polynomial-time algorithm from~\cite{HKLSS10} to determine whether $G$ is $4$-colorable and return the $4$-coloring produced by the algorithm as a certificate in case it is. If not, one of the $5$-vertex-critical graphs from the exhaustive list must occur as an induced subgraph. In the second phase, the algorithm can then iterate over each graph $G'$ in this list, check in polynomial time whether $G'$ occurs as an induced subgraph of $G$ and return it as a certificate in case it does.

In order to produce this finite list, we made two independent implementations of the algorithm described in~\cite{GS18}. We adapted this algorithm to be able to handle the $(P_5,H)$-free case for $H \in \mathcal{S}$ and made these implementations publicly available at~\cite{CodeJan} and~\cite{CodeJorik}. The algorithm recursively enumerates all $k$-vertex-critical $\mathcal{H}$-free graphs for a given positive integer $k$ and a set of graphs $\mathcal{H}$ (the pseudocode is given in \autoref{algo:recEnum}). More precisely, the algorithm additionally receives as an input a graph $I$ and in each recursion step one vertex is added as well all valid edges between this new vertex and the other vertices in all possible ways (note that $I$ occurs as an induced subgraph of the newly generated graphs). By calling the algorithm using the appropriate induced subgraphs $I$, one can enumerate all $k$-vertex-critical $\mathcal{H}$-free graphs (e.g.\ a correct but suboptimal choice would be $I=K_1$). In order to avoid having to add edges in all possible ways, the algorithm will use several pruning rules that forbid certain edge combinations with the guarantee that every $k$-vertex-critical $\mathcal{H}$-free graph was either generated before or has one of the newly generated graphs as an induced subgraph. For certain integers $k$ and sets of graphs $\mathcal{H}$, these pruning rules are powerful enough to let the algorithm terminate and hence produce an exhaustive list of $k$-vertex-critical $\mathcal{H}$-free graphs. A specific example of a pruning rule can be derived from noting that a vertex-critical graph cannot have two different vertices $u,v$ such that $N(u) \subseteq N(v)$ (i.e.\ comparable vertices, see \autoref{lem:XY} for a generalization of this fact). Hence, if $I$ is a graph that contains two such vertices $u$ and $v$ and occurs as an induced subgraph of a vertex-critical graph $G$, then there must be another vertex $w \in V(G)$ such that $w \in N_G(u)$, but $w \notin N_G(v)$. The algorithm can exploit this fact by adding the vertex $w$ to $I$ and a pruning rule which forbids that $u$ and $v$ are comparable vertices in the newly obtained graph. The algorithm employs multiple such pruning rules and we refer the interested reader to~\cite{GS18} for a complete overview of these rules. To test if a graph was not already generated before (cf.\ line~\ref{algo:isocheck} of \autoref{algo:recEnum}), we use the program \textit{nauty}~\cite{MP14} to compute a canonical form.

\begin{algorithm}[ht!b]
\caption{RecursivelyEnumerate(Integer $k$, Set of graphs $\mathcal{H}$, Induced graph $I$)}
\label{algo:recEnum}
\setstretch{0.9}
  \begin{algorithmic}[1]
		\STATE // This function recursively enumerates all $k$-vertex-critical $\mathcal{H}$-free graphs $G$ such that $I$ occurs as an induced subgraph of $G$
		\IF{$I$ was not previously enumerated AND $I$ is $\mathcal{H}$-free} \label{algo:isocheck}
			\IF{$\chi(I) \ge k$}
				\IF{$\chi(I-v)<k$ for all $v \in V(I)$}
					\STATE output $I$ // $I$ is $k$-vertex-critical $\mathcal{H}$-free
				\ENDIF	
			\ELSE
				\FOR{each graph $I'$ which is a one-vertex-extension of $I$}
					\IF{$I'$ is allowed by all pruning rules}
						\STATE RecursivelyEnumerate($k$,$\mathcal{H}$,$I'$)
					\ENDIF
				\ENDFOR					
			\ENDIF	
		\ENDIF	
  \end{algorithmic}
\end{algorithm}

\begin{table}[ht!b]
\footnotesize
\centering
\renewcommand{\arraystretch}{1.0}
\begin{tabular}{|c|*{6}{c|}}
\hline
 & \multicolumn{3}{c|}{5-vertex-critical $(P_5,H)$-free graphs} & \multicolumn{3}{c|}{5-critical $(P_5,H)$-free graphs} \\
\hline
Order & $\scriptstyle H=K_{1,3}+P_1$ & $\scriptstyle  H=K_{1,4}+P_1$ & $\scriptstyle  H=\overline{K_3+2P_1}$ & $\scriptstyle  H=K_{1,3}+P_1$ & $\scriptstyle  H=K_{1,4}+P_1$ & $\scriptstyle  H=\overline{K_3+2P_1}$ \\
\hline
$n=5$ & 1 & 1 & 1 & 1 & 1 & 1 \\
$n=6$ & 0 & 0 & 0 & 0 & 0 & 0 \\
$n=7$ & 1 & 1 & 1 & 1 & 1 & 1 \\
$n=8$ & 7 & 7 & 6 & 2 & 2 & 1 \\
$n=9$ & 198 & 199 & 180 & 8 & 9 & 8 \\
$n=10$ & 16 & 16 & 2 & 3 & 3 & 1 \\
$n=11$ & 24 & 24 & 5 & 7 & 7 & 2 \\
$n=12$ & 57 & 66 & 2 & 12 & 16 & 1 \\
$n=13$ & 40 & 67 & 5 & 8 & 19 & 2 \\
$n=14$ & 0 & 35 & 2 & 0 & 16 & 1 \\
$n=15$ & 0 & 71 & 3 & 0 & 13 & 1 \\
$n=16$ & 0 & 24 & 0 & 0 & 9 & 0 \\
$n=17$ & 0 & 23 & 4 & 0 & 7 & 2 \\
$n=18$ & 0 & 0 & 0 & 0 & 0 & 0 \\
$n=19$ & 0 & 0 & 1 & 0 & 0 & 1 \\
$n=20$ & 0 & 0 & 0 & 0 & 0 & 0 \\
$n=21$ & 0 & 0 & 1 & 0 & 0 & 1 \\
$n=22$ & 0 & 0 & 0 & 0 & 0 & 0 \\
$n=23$ & 0 & 0 & 1 & 0 & 0 & 1 \\
\hline
Total & 344 & 534 & 214 & 42 & 103 & 24 \\
\hline
\end{tabular}
\caption{An overview of the counts of all 5-vertex-critical and 5-critical $(P_5,H)$-free graphs for $H \in \{K_{1,3}+P_1,K_{1,4}+P_1,\overline{K_3+2P_1}\}.$}
\label{tab:counts}
\end{table}

This leads to the following characterization:
\begin{theorem}\label{th:characterization}
There are exactly 344 5-vertex-critical $(P_5,K_{1,3}+P_1)$-free graphs and the largest such graphs have order 13. There are exactly 534 5-vertex-critical $(P_5,K_{1,4}+P_1)$-free graphs and the largest such graphs have order 17. There are exactly 214 5-vertex-critical $(P_5,\overline{K_3+2P_1})$-free graphs and the largest such graphs have order 23.
\end{theorem}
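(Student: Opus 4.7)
The approach is fundamentally computational: Theorems~\ref{th:k-vertex-critical}, \ref{th:5-vertex-critical} and \ref{th:k3+2p1} already guarantee that for each $H\in\mathcal{S}$ there are only finitely many $5$-vertex-critical $(P_5,H)$-free graphs, so an exhaustive enumeration is in principle possible. The plan is to execute the recursive generation scheme of \cite{GS18} (reproduced as Algorithm~\ref{algo:recEnum}), specialized to the forbidden pair $(P_5,H)$, and then read the counts and maximum orders directly from the output.

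Concretely, I would launch \textsc{RecursivelyEnumerate}$(5,\{P_5,H\},I_0)$ for an appropriate seed $I_0$. Taking $I_0=K_1$ is correct but wasteful; a better strategy is to iterate over a small menu of initial seeds (for instance $K_4$, $C_5$, $\overline{C_7}$, $\overline{C_9}$, and the small $(P_5,H)$-free graphs of order $\le 5$), since every $5$-vertex-critical $(P_5,H)$-free graph either is perfect — in which case it must be $K_5$ — or contains an induced odd hole or odd antihole by Theorem~\ref{thm:SPGT}, restricted to $C_5$, $\overline{C_7}$, $\overline{C_9}$ because $G$ is $P_5$-free with $\chi(G)=5$. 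At each recursive step the algorithm adds one new vertex and tries every compatible neighbourhood into the current graph $I$, discarding any extension that (i) introduces an induced $P_5$ or $H$, (ii) creates comparable vertices, violating the generalized criticality condition of Lemma~\ref{lem:XY}, (iii) has $\chi\ge 5$ but is not vertex-critical, or (iv) is eliminated by any of the further symmetry-breaking and criticality-forcing rules of \cite{GS18}. Isomorph rejection on line~\ref{algo:isocheck} is handled by computing a canonical form with \textit{nauty}~\cite{MP14}. Whenever a vertex-critical graph is discovered it is output; whenever $\chi(I)<5$ and no legal extension remains, the branch terminates.

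Termination in principle follows from Theorems~\ref{th:k-vertex-critical}--\ref{th:k3+2p1}, which give a (possibly huge) a priori bound on $|V(G)|$; in practice termination is witnessed by the search tree being exhausted, at which point the collected list is provably complete. The main obstacle is the combinatorial explosion of the branching: especially for $H=K_{1,4}+P_1$, which admits graphs up to order $17$, the number of intermediate candidates is very large, so the implementation must test $P_5$- and $H$-freeness, $4$-colourability, and canonical form extremely efficiently, and must apply the pruning rules as eagerly as possible. The second, more epistemological, obstacle is trusting the output of a long computer search.

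To mitigate that second obstacle, I would (as the authors indicate) produce two independent implementations, located at \cite{CodeJan} and \cite{CodeJorik}, written by different people on top of different graph libraries, and check that their counts at every order and for every $H$ agree entry-by-entry with Table~\ref{tab:counts}. As an additional sanity check I would verify that each output graph is (a) $(P_5,H)$-free, (b) has $\chi=5$, and (c) becomes $4$-colourable after the deletion of any vertex, using an independent colouring routine; and I would cross-check the $5$-critical counts in the right half of Table~\ref{tab:counts} against the $5$-vertex-critical counts (every $5$-critical $(P_5,H)$-free graph is $5$-vertex-critical, and among the vertex-critical graphs the critical ones are those for which no edge deletion preserves being $(P_5,H)$-free and $5$-chromatic). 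Once both implementations agree and all sanity checks pass, the totals $344$, $534$, $214$ and the maximum orders $13$, $17$, $23$ can be read off directly from the final list, which proves Theorem~\ref{th:characterization}.
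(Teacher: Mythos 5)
Your proposal is correct and follows essentially the same route as the paper: use the Strong Perfect Graph Theorem together with $P_5$-freeness and $\chi=5$ to reduce to the cases $K_5$, $\overline{C_9}$, or an induced $C_5$ or $\overline{C_7}$, then run the Goedgebeur--Schaudt generation algorithm (Algorithm~\ref{algo:recEnum}) from those seeds with two independent implementations cross-checked via \textit{nauty}. The only minor caveat is that termination of the algorithm is not formally guaranteed by the finiteness theorems alone (it depends on the pruning rules being strong enough), but, as you note, termination is witnessed by the search tree actually being exhausted, which is exactly the paper's position.
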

\begin{proof}
It follows from the proofs of \autoref{th:k-vertex-critical}, \autoref{th:5-vertex-critical} and \autoref{th:k3+2p1} that every $5$-vertex-critical $P_5$-free graph is either isomorphic with $K_5$ or $\overline{C_9}$ or contains $\overline{C_5}$ or $\overline{C_7}$ as an induced subgraph. Hence, to produce the exhaustive list of $5$-vertex-critical $(P_5,H)$-free graphs for $H \in \mathcal{S}$, we have the graphs $K_5$ and $\overline{C_9}$ and we called \autoref{algo:recEnum} with the parameters $k=5$, $\mathcal{H}=\{P_5,H\}$ and $I=\overline{C_5}$ and $I=\overline{C_7}$.  In Table~\ref{tab:counts} we summarized for each order the number of $5$-vertex-critical $(P_5,H)$-free graphs with that order (as well as the number of the 5-critical $(P_5,H)$-free graphs). As expected, the two implementations of \autoref{algo:recEnum} (cf.~\cite{CodeJan} and~\cite{CodeJorik}) produced exactly the same graphs.
\end{proof}

We also made the exhaustive lists of the graphs from Table~\ref{tab:counts} publicly available on the \textit{House of Graphs}~\cite{CDG} at \url{https://houseofgraphs.org/meta-directory/critical-h-free}.

\section*{Acknowledgements}
\noindent The research of Jan Goedgebeur was supported by Internal Funds of KU Leuven. Jorik Jooken is supported by a Postdoctoral Fellowship of the Research Foundation Flanders (FWO) with grant number 1222524N. Shenwei Huang is supported by the Natural Science Foundation of China (NSFC) under Grant 12171256 and 12161141006.
We also acknowledge the support of the joint FWO-NSFC scientific mobility project with grant number VS01224N.


\begin{thebibliography}{10}

        \bibitem{ACHS22}
        T.~Abuadas, B.~Cameron, C.~T.~Ho\`{a}ng, and J.~Sawada.
        \newblock{Vertex-critucal $P_3+lP_1$-free and vertex-critical (gem, co-gem)-free graphs.}
		 \newblock {\em Discrete Appl. Mathematics}, 344:179--187, 2024.


		\bibitem{BM08}
		J.~A. Bondy, and U.~S.~R. Murty.
		\newblock {\em Graph Theory}.
		\newblock Springer, 2008.

       \bibitem{BGS22}
       C.~Brause, M.~Gei{\ss}er, and I.~Schiermeyer.
       \newblock Homogeneous sets, clique-separators, critical graphs and optimal
       $\chi$-binding functions.
       \newblock {\em Discrete Appl. Mathematics}, 320:211--222, 2022.

		
		\bibitem{BHS09}
		D.~Bruce, C.~T. Ho{\`a}ng, and J.~Sawada.
		\newblock A certifying algorithm for 3-colorability of {$P_5$}-free graphs.
		\newblock In {\em Proceedings of 20th International Symposium on Algorithms and Computation}, Lecture Notes in Computer Science 5878, pages 594--604, 2009.
		
		\bibitem{CGS}
		Q.~Cai, J.~Goedgebeur, and S.~Huang.
		\newblock Some results on $k$-critical $P_5$-free graphs.
		\newblock {\em Discrete Appl. Mathematics}, 334:91--100, 2023.
		

      \bibitem{CH23}
       B.~Cameron, and C.~T.~Ho\`ang.
    \newblock Infinite families of $k$-vertex-critical $({P}_5,{C}_5)$-graphs.
     \newblock {\em Graphs Combin.}, 40:30, 2024.

        \bibitem{CHS22}
        B.~Cameron, C.~T.~Ho\`{a}ng, and J.~Sawada.
        \newblock Dichotomizing $k$-vertex-critical $H$-free graphs for $H$ of order four.
        \newblock {\em Discrete Appl. Mathematics}, 312:106--115, 2022.
		
		\bibitem{CGHS21}
		K.~Cameron, J.~Goedgebeur, S.~Huang, and Y.~Shi.
		\newblock $k$-critical graphs in $P_5$-free graphs.
		\newblock {\em Theoretical Computer Science}, 864:80--91, 2021.
		
		
		\bibitem{CRST06}
		M.~Chudnovsky, N.~Robertson, P.~Seymour, and R.~Thomas.
		\newblock The strong perfect graph theorem.
		\newblock {\em Annals of Mathematics}, 164:51--229, 2006.

        \bibitem{CSZ19}
        M.~Chudnovsky, S.~Spirkl, and M.~Zhong.
        \newblock Four-coloring $P_6$-free graphs.
        \newblock{In {\em Proceedings of the Tirtieth Anunuals ACM-SIAM Symposiumnon Disctete Algorithms}, SODA'19},
        page 1239--1256, USA, 2019. Society for Industrial and Applied Mathematics.

		\bibitem{CDG}
		K.~Coolsaet, S.~D'hondt and J.~Goedgebeur.
		\newblock House of Graphs 2.0: A database of interesting graphs and more.
		\newblock {\em Discrete Appl. Mathematics}, 325:97--107, 2023. Available at \url{https://houseofgraphs.org}
				
		\bibitem{DHHMMP17}
		H.~S. Dhaliwal, A.~M. Hamel, C.~T. Ho\`{a}ng, F.~Maffray, T.~J.~D. McConnell,
		and S.~A. Panait.
		\newblock {On color-critical (${P}_5$, co-${P_5}$)-free graphs}.
		\newblock {\em Discrete Appl. Mathematics}, 216:142--148, 2017.
		
		\bibitem{CodeJan}
		J.~Goedgebeur.
		 Homepage of generator for $k$-critical $\mathcal{H}$-free graphs:
		 \url{https://caagt.ugent.be/criticalpfree/}
		
	\bibitem{GS18}
	J.~Goedgebeur, and O.~Schaudt.
	 \newblock{Exhaustive generation of $k$-critical $\mathcal{H}$-free graphs.}
	 \newblock{ {\em J. Graph Theory}, 87:188--207, 2018.}

		\bibitem{GLS84}
        M.~Gr\"{o}tschel, L.~Lov\'{a}sz, and A.~Schrijver.
        \newblock{Polynomial algorithms for perfect graphs}.
        \newblock{In C.~Berge and V.~Chv\'{a}tal, editors. {\em Topics on Perfect Graphs},
        volume 88 of {\em North-Holland Mathematics Studies}}, pages 325--356. North-Holland, 1984.

        \bibitem{HKLSS10}
       C.~T.~Ho\`{a}ng, M.~Kami\'{n}ski, V.~Lozin, J.~Sawada, and X.~Shu.
       \newblock Deciding $k$-colorability of $P_5$-free graphs in polynomial time.
       \newblock {\em Algorithmica}, 57:74--81, 2010.
		
		\bibitem{HMRSV15}
		C.~T. Ho\`{a}ng, B.~Moore, D.~Recoskiez, J.~Sawada, and M.~Vatshelle.
		\newblock Constructions of $k$-critical ${P_5}$-free graphs.
		\newblock {\em Discrete Appl. Math.}, 182:91--98, 2015.

        \bibitem{H81}
        I.~Holyer.
        \newblock The NP-completeness of edge-coloring.
        \newblock {\em SIAM J. Comput.}, 10:718--720, 1981.

        \bibitem{H16}
        S.~Huang.
        \newblock Improved complexity results on $k$-coloring $P_t$-free graphs.
        \newblock {\em European J. Combin.}, 51:336--346, 2016.

         \bibitem{HLX23}
         S.~Huang, J.~Li, and W.~Xia.
		\newblock Critical ($P_5,bull$)-free graphs.
        \newblock {\em Discrete Appl. Mathematics}, 334:15--25, 2023.

		
		

         \bibitem{HL23}
        S.~Huang, and Z.~Li.
        \newblock Vertex-critical ($P_5,chair$)-free graphs.
         \newblock {\em Discrete Appl. Mathematics}, 341:9--15, 2023.
	
			\bibitem{CodeJorik}
		J.~Jooken.
		 GitHub page containing generator for $k$-vertex-critical $\mathcal{H}$-free graphs:
		 \url{https://github.com/JorikJooken/kVertexCriticalGraphs}
	
		\bibitem{KL07}
       M.~Kami\'nski, and V.~Lazin.
       \newblock Coloring edges and vertices of graphs without short or long cycles.
       \newblock {\em Contrib. Discrete Math.}, 2:61--66, 2007.
		
		\bibitem{KP17}
		M.~Kami\'nski, and A.~Pstrucha.
		\newblock Certifying coloring algorithms for graphs without long induced paths.
		\newblock {\em Discrete Applied Mathematics}, 261:258-267, 2019.

       \bibitem{LG83}
       D.~Leven, and Z.~Gail.
       \newblock NP completeness of finding the chromatic index of regular graphs.
       \newblock {\em J. Algorithms}, 4:35--44, 1983.
		
		
		\bibitem{MM12}
		F.~Maffray, and G.~Morel.
		\newblock On 3-colorable $P_5$-free graphs.
		\newblock {\em SIAM J. Discrete Math.}, 26:1682--1708, 2012.

       \bibitem{MP96}
       F.~Maffray, and M.~Preissmann.
       \newblock On the NP-completeness of the $k$-colorability problem for triangle-free graphs.
       \newblock {\em Discrete Math.}, 162:313--317, 1996.
       
       
		\bibitem{MP14}
		B.~D.~McKay and A.~Piperno. 
		\newblock Practical graph isomorphism, II. 
		\newblock  \emph{J. Symb. Comput.}, 60:94--112, 2014.       

       \bibitem{XJGH23}
       W.~Xia, J.~Jooken, J.~Goedgebeur, and S.~Huang.
       \newblock Critical $(P_5,dart)$-free graphs.
       \newblock  In {\em 16th Annual International Conference on Combinatorial Optimization and Applications}, COCOA 2023,
       pages 390--402, 2024.
		
	\end{thebibliography}
\end{document}